\documentclass[11pt]{article}
\usepackage{amsmath, amsfonts, amsthm, amssymb}
\usepackage{graphics}

\usepackage{fullpage}
\usepackage{dsfont}
\usepackage{color}
\usepackage{epsfig,psfrag}

\newcommand{\INTT}{\int_{\torus^3}}
\newcommand{\PP}{\mathbb{P}}
\newcommand{\what}{\widehat}

% Define the title
\title{ Stability of fully discrete variational schemes for elastodynamics with a polyconvex stored energy}

\author{Alexey Miroshnikov
\thanks {Department of Mathematics, University of California, Los Angeles, amiroshn@math.ucla.edu}\;
%\thanks {Department of Mathematics, Carnegie Mellon University, PA }
}

\date{}

%\numberwithin{lemma}{section}%
%\numberwithin{theorem}{section}
%\numberwithin{remark}{section}
\numberwithin{equation}{section}
%\numberwithin{definition}{section}

\begin{document}

% We consider a variational scheme developed by S. Demoulini, D. M. A.
% Stuart and A. E. Tzavaras [Arch. Ration. Mech. Anal. 157 (2001), 325–344] that approximates
% the equations of three dimensional elastodynamics with polyconvex stored
% energy. We establish the convergence of the time-continuous interpolates constructed
% in the scheme to a solution of polyconvex elastodynamics before shock formation.
% The proof is based on a relative entropy estimation for the time-discrete approximants
% in an environment of Lp-theory bounds, and provides an error estimate for the
% approximation before the formation of shocks

\maketitle

\abstract{In this article we develop a fully discrete variational  scheme that approximates the equations of three dimensional elastodynamics with polyconvex stored energy. The fully discrete scheme is based on a time-discrete variational scheme developed by S.~Demoulini, D.~M.~A.~Stuart and A.~E.~Tzavaras (2001). We show that the fully discrete scheme is unconditionally stable. The proof of stability is based on a relative entropy estimation for the fully discrete approximates.
}

% \tableofcontents

\newcommand{\T}{\top}
\newcommand{\trp}{^{\text{T}}}
\newcommand{\diag}{{\text{diag}}}
\newcommand{\Def}{\mathrm{Def}}
\newcommand{\f}[2]{\frac{#1}{#2}}
\newcommand{\gr}[1]{\nabla {#1}}

\def\mc{\mathcal}
\def\com#1{\quad\text{#1}\quad}

\newcommand{\wint}{(w*)\dash \int}

\newcommand{\Id}{{\bf I}}

\def\Weak{\,\,\relbar\joinrel\rightharpoonup\,\,}

\newcommand{\mat}{\hbox{Mat}}
\def\del{\partial}
\def\torus{\mathbb{T}}
\def\Real{{\mathop{\hbox{\msym \char '122}}}}
\newcommand{\n}{\mbox{\boldmath $ \nu$}}
\newcommand{\DD}{\mathbb{D}}
\newcommand{\D}{\mbox{D}}
\newcommand{\cd}{{\cal D}}
\newcommand{\sma}{_{{ A}}}
\newcommand{\ce}{{\cal E}}
\newcommand{\dist}{{\mbox{dist}}}
\newcommand{\cs}{{\cal S}}
\newcommand{\ca}{{\cal A}}
\newcommand{\cc}{{\cal C}}
\newcommand{\cp}{{\cal P}}
\newcommand{\cb}{{\cal B}}
\newcommand{\cl}{{\cal L}}
\newcommand{\cg}{{\cal G}}
\newcommand{\hj}{^{J}}
\newcommand{\m}{\mbox{\boldmath $ \mu$}}
\newcommand{\nutx}{\nu_{t,x}}
\newcommand{\mutx}{\mu_{t,x}}
\newcommand{\hjm}{^{J-1}}
\newcommand{\cu}{{\cal U}}
\newcommand{\tn}{{\tilde\|}}
\newcommand{\rbar}{\overline{r}}
\newcommand{\dxdt}{\; dxdt}
\newcommand{\dx}{\; dx}
\newcommand{\oeps}{\overline{\varepsilon}}
\newcommand{\cgl}{\hbox{Lie\,}{\cal G}}
\newcommand{\ttd}{{\tt d}}
\newcommand{\ttdel}{{\tt \delta}}
\newcommand{\ns}{\nabla_*}
\newcommand{\csl}{{\cal SL}}
\newcommand{\spsi}{_{{{\Psi}}}}
\newcommand{\kr}{\hbox{Ker}}
\newcommand{\qinfo}{\stackrel{\circ}{Q}_{\infty}}
\newcommand{\eq}[1]{\begin{equation*}#1\end{equation*}}
\newcommand{\diagmtx}[3]{\begin{bmatrix} #1 \; \quad \; \quad\\\quad \; #2 \; \quad \\\quad \; \quad \; #3\end{bmatrix}}

\newcommand{\B}{\mathcal{B}}
\newcommand{\BB}{\mathfrak{B}}
\newcommand{\cof}{\mathrm{cof\, }}
\newcommand{\RR}{\mathbb{R}}
\newcommand{\eps}{\varepsilon}
\newcommand{\Chi}{\mathcal{X}}

\newcommand{\cD}{\mathcal{D}}
\newcommand{\pd}[2]{\frac{\partial #1}{\partial #2}}
\newcommand{\BBR}[1]{\left( #1 \right)}
\newcommand{\BBS}[1]{\left[ #1 \right]}
\newcommand{\BBF}[1]{\left\{ #1 \right\}}
\newcommand{\BBN}[1]{\left\| #1 \right\|}
\newcommand{\BBA}[1]{\left | #1 \right |}
\newcommand{\mdd}[1]{M^{#1\times #1}}
\newcommand{\mddplus}[1]{M^{#1\times #1}_+}
\newcommand{\mddplusb}[1]{{\bar{M}}^{#1\times #1}_+}
\newcommand{\PHD}{\frac{\partial{\varPhi^A}}{\partial{F_{i\alpha}}}}
\newcommand{\PSD}{\frac{\partial{\Phi^A}}{\partial{F_{i\alpha}}}}

\newcommand{\txq}[1]{\quad \mbox{#1} \quad}
\newcommand{\txc}[1]{\; \mbox{#1} \;}
\newcommand{\txs}[1]{\: \mbox{#1} \:}
\newcommand{\tx}{\mbox}
\newcommand{\sbt}{\subset}
\newcommand{\ssbt}{\subset\subset}

\newcommand{\tld}[1]{\widetilde{#1}}

\newcommand{\Var}{\mathbb{V}}

\def\Weak{\,\,\relbar\joinrel\rightharpoonup\,\,}
\def\Real{{\mathop{\hbox{\msym \char '122}}}}

%\def\clap#1{\hbox to 0pt{\hss#1\hss}}
%\renewcommand{\dddot}[1]{\overset{\small\mathrm{\raisebox{-0.15ex}{\clap{$\displaystyle\hspace{0.1em}.\hspace{-0.1em}.\hspace{-0.1em}.$}}}}{#1}}

%------------------
\newcommand{\SP}{\hspace{1pt}}
\newcommand{\DSP}{\hspace{2pt}}

%------------------

%\newtheorem{theorem}{Theorem}[subsection]
%\newtheorem*{proposition*}{Proposition}
%\newtheorem{proposition}[theorem]{Proposition}
%\newtheorem*{theorem*}{Theorem}
%\newtheorem{corollary}[theorem]{Corollary}
%\newtheorem{convention}[theorem]{Convention}
%\newtheorem*{convention*}{Convention}
%\newtheorem*{corollary*}{Corollary}
%\newtheorem*{definition*}{Definition}
%\newtheorem{definition}[theorem]{Definition}
%\newtheorem{lemma}[theorem]{Lemma}
%\newtheorem*{lemma*}{Lemma}
%\newtheorem{remark}[theorem]{Remark}
%\newtheorem*{remark*}{Remark}
%\newtheorem*{mtheorem*}{Main Theorem}
%\newtheorem{mtheorem}[theorem]{Main Theorem}
%
%
\newtheorem{theorem}{Theorem}[section]
\newtheorem*{proposition*}{Proposition}
\newtheorem{proposition}[theorem]{Proposition}
\newtheorem*{theorem*}{Theorem}
\newtheorem{corollary}[theorem]{Corollary}
\newtheorem{convention}[theorem]{Convention}
\newtheorem*{convention*}{Convention}
\newtheorem*{corollary*}{Corollary}
\newtheorem*{definition*}{Definition}
\newtheorem{definition}[theorem]{Definition}
\newtheorem{lemma}[theorem]{Lemma}
\newtheorem*{lemma*}{Lemma}
\newtheorem{remark}[theorem]{Remark}
\newtheorem*{remark*}{Remark}
\newtheorem*{mtheorem*}{Main Theorem}
\newtheorem{mtheorem}[theorem]{Main Theorem}

\newcommand{\vertiii}[1]{{\left\vert\kern-0.25ex\left\vert\kern-0.25ex\left\vert #1
\right\vert\kern-0.25ex\right\vert\kern-0.25ex\right\vert}}

\newcommand{\ws}{\text{\it w*}}

\newcommand{\dash}{\mbox{-}}

\newcommand{\<}{\big\langle}

\renewcommand{\>}{\big\rangle}

%\DeclareMathOperator*{\esssup}{ess\,sup}

%{ \color{blue}
%\par\smallskip
%
% \noindent{\bf Things to Do:}
%
%\begin{enumerate}
%  \item Improve Introduction, include info from the NSF grant.
%  \item Improve section 2.
%  \item Reorganize material.
%\end{enumerate}
%}

\section{Introduction}

The equations describing the evolution of a continuous medium with nonlinear
elastic response and zero body forces in referential description are given by
\begin{equation}\label{ELASTINTRO1}
\frac{\partial^2 y}{\partial t^2}=\mathrm{div}\, S(\nabla y)
\end{equation}
where $y(x,t)  : \Omega \times [0,\infty) \to{\RR}^3$ stands for
the elastic motion, $S$ for the Piola-Kirchhoff stress tensor and
the region $\Omega$ is the reference configuration of the elastic
body.

\par

The equations
\eqref{ELASTINTRO1} are often recast as a system of conservation
laws,
\begin{equation}\label{ELASTINTRO2}
\qquad\qquad\qquad\begin{aligned}
\partial_{t} v_i &= \partial_{x_{\alpha}}  S_{i\alpha}(F)\\
\partial_{t}F_{i\alpha}&=\partial_{x_{\alpha}}v_i,
\end{aligned} \qquad \alpha=1,\dots,3
\end{equation}
for the velocity $v=\partial_t \SP y \in \RR^3$  and the
deformation gradient $F=\nabla y \in \mdd{3}$ and we use the
summation convention over repeated indices. The differential
constraints
\begin{equation}\label{GRADCONSTR}
\partial_{\beta} F_{i\alpha} - \partial_{\alpha} F_{i\beta} = 0
\end{equation}
are propagated from  the kinematic equation
\eqref{ELASTINTRO2}$_2$ and are an involution \cite{Dafermos86}.

\par

For  {\it hyperelastic} materials
the first Piola-Kirchhoff stress tensor $S(F)=\frac{\del {W}}{\del F}(F)$ is expressed as the
gradient of the stored-energy function of the elastic body
\begin{equation*}
{W}(F):\mddplus{3} \to \RR^3 \quad \mbox{where} \quad \mddplus{3}:=\bigl\{F\in\mdd{3}: \,\det{F}>0 \bigr\}.
\end{equation*}

\par

Convexity of the stored energy is, in general,  incompatible with certain physical requirements and  is not a natural assumption. As an alternative,  we consider {\em polyconvex} stored energy ${W}$, which means that
\begin{equation}\label{POLYCONVEXITY}
{W}(F)=G(F, \cof{F}, \det{F})=G \circ{\Phi}(F), \quad {\Phi}(F)=(F, \cof{F}, \det{F})
\end{equation}
where
%\begin{equation}\label{XIDEF}
%{\Phi}(F) := (F, \cof{F}, \det{F})
%\end{equation}
%and
\begin{equation}\label{GFUNCDEF}
G=G(\xi)=G(F,Z,w):\mddplus{3}\times\mdd{3}\times\RR \,\cong\,
\RR^{19} \to \RR
\end{equation}
is a convex function.

\par

For polyconvex stored energies \eqref{POLYCONVEXITY} the system of
elasticity \eqref{ELASTINTRO1} is expressed by
\begin{equation}\label{PXELASTSYS}
\begin{aligned}
\partial_{t} v_i&=\partial_{x_{\alpha}}\biggl(\pd {G}{\xi_{A}}({\Phi}(F))
\pd{{\Phi}^{A}}{F_{i\alpha}}(F)\biggr)\\
\partial_{t} F_{i\alpha}&=\partial_{x_{\alpha}} v_i
\end{aligned}
\end{equation}
which is equivalent to \eqref{ELASTINTRO1} subject to differential
constrains \eqref{GRADCONSTR} that are an involution \cite{Dafermos86}: if they are satisfied
for $t=0$ then \eqref{PXELASTSYS} propagates \eqref{GRADCONSTR} to
satisfy for all times. Thus the system \eqref{PXELASTSYS} is
equivalent to systems \eqref{ELASTINTRO1} whenever $F(\cdot,0)$ is
a gradient. Most importantly the system \eqref{PXELASTSYS} is endowed with the entropy identity
\begin{equation}\label{PXENID}
\partial_t\biggl(\frac{|v|^2}{2}+G({\Phi}(F))\biggr)-\partial_{x_{\alpha}}\biggl(v_i\,\pd{G}{
\xi_A}({\Phi}(F))\,\pd{{\Phi}^A}{F_{i\alpha}}(F)\biggr)=0.
\end{equation}

In the present work we are concerned with the design of a numerical method along with a numerical analysis theory for solutions to the equations of elastodynamics with polyconvex stored energies.  The main objective of this work is to develop fully-discrete numerical scheme for equation \eqref{PXELASTSYS} based on the time-discrete variational method introduced by Stuart, Demoulini and Tzavaras \cite{DST2}. The variational method in \cite{DST2} is used to approximate solutions of elasticity equations with polyconvex stored energy. In \cite{DST2} the equations \eqref{PXELASTSYS} are embedded into a larger system 
\begin{equation}\label{EXTELASTSYS}
\begin{aligned}
\partial_{t} v_i&=\partial_{x_{\alpha}}\biggl(\pd {G}{\xi_{A}}(\xi)
\pd{{\Phi}^{A}}{F_{i\alpha}}(F)\biggr)\\
\partial_{t} \xi_A &= \partial_{\alpha}\Big(\pd{\Phi^{A}}{F_{i\alpha}}(F)\,v\Big)
\end{aligned}
\end{equation}
that has variables $v\in\RR^3$, $\Xi \in \RR^{19}$ and is equipped with a convex entropy $\eta(v,\Xi)=\frac{1}{2}v^2 +G(\Xi)$. The convexity of the entropy $\eta$ allows the authors to employ variational techniques in time-discrete settings.  The variational method produces the sequence of spatial iterates $\{v^n,\Xi^n\}_{n \geq 1}$ that solve the time-discretized version of the enlarged elasticity system,
\begin{equation}\label{DISCSYS}
\begin{aligned}
\tfrac{1}{\Delta t}{(v^n_i-v^{n-1}_i)}&=\partial_{\alpha}\Big(\pd{G}{\varXi_{A}}(\varXi^n)\,\,\pd{\Phi^{A}}{F_{i\alpha}}(F^{n-1})\Big)\\ \quad
\tfrac{1}{\Delta t}{(\varXi^n_A-\varXi^{n-1}_A)}&=\partial_{\alpha}\Big(\pd{\Phi^{A}}{F_{i\alpha}}(F^{n-1})\,v^n_i\Big)
\end{aligned} \quad \mbox{in} \quad \cD'(\torus^3)\,,
\end{equation}
and give rise to time-continuous approximates converging to the solution of elastodynamics before shock formation (see \cite{MT12}).

\par

The main challenge in the present work is to adapt the minimization framework of \cite{DST2} to space-discrete settings, which can be accomplished by an appropriately designed finite element method. To realize the finite element scheme it is essential a) to identify appropriate finite element spaces used in space discretization,  b) to provide an error estimate for the approximation, and c) to test the finite element scheme numerically.

\par

In our present work we  introduce the following numerical method: given appropriate finite element spaces $U_h, H_h$, and data
$(v^{n-1}_h,\Xi^{n-1}_h) \in U_h \times H_h$ at time step $t=t_{n-1}$
construct the next iterate by solving
\begin{equation}\label{FESCHEME}
\begin{aligned}
\tfrac{1}{\Delta t}\big( v^n_h - v^{n-1}_h , \varphi_h \big)  &\,= \,-
\big( D_{\Xi}G(\Xi^n_h), D_{F}\Phi(F^{n-1}_h) \nabla \varphi_h \big)\,, \quad \forall\varphi_h \in U_h\\[3pt]
\mbox{where} \quad \Xi^n_h &\,=\, \Xi^{n-1}_h + \Delta t \SP \big( D_{F}\Phi(F^{n-1}_h) \nabla v^n \big) \in H_h\,.
\end{aligned}
\end{equation}
In this work we introduce suitable finite element spaces that render the finite element scheme \eqref{FESCHEME} {unconditionally stable}, which is a necessary requirement for any reliable numerical method. The spaces of test functions are rich enough that an important (gradient) conservation property $\frac{1}{\Delta t}{F^j_h-F^{j-1}_h}=\nabla v^j_h$ is satisfied by the finite element approximation at each time step. This property is essential in adapting the method of \cite{DST2} to a fully discrete scheme.
The existence of numerical solutions to \eqref{FESCHEME} is obtained using minimization principles. 

\par
In this article we establish the stability of numerical solutions and derive the relative entropy identity which is central to establishing the convergence and providing an error estimate. Our stability analysis follows in spirit the work of Miroshnikov and Tzavaras \cite{MT12} where the authors established the direct convergence of iterates produced by the time-discrete scheme \eqref{DISCSYS}. Specifically, following \cite{MT12}, we consider the relative entropy $\eta^r=\eta^r(x,t)$
 \begin{equation*}
\eta^r=\tfrac{1}{2}|V^{(\Delta t,h)}-\bar{V}|^2+
\big[G(\Xi^{(\Delta t,h)})-G(\bar{\Xi})-{D_{\Xi}G(\bar{\Xi})}(\Xi^{(\Delta t,h)}-\bar{\Xi})\big]
\end{equation*}
that estimates the difference between time-continuous approximations $V^{(\Delta t,h)},\Xi^{(\Delta t,h)}$
generated by the numerical scheme and the classical solution $(\bar{V},\bar{\Xi})$
of the extended elasticity system and derive {\em the relative entropy identity}
\begin{equation}\label{RENTIDINTRO}
\int_{\Omega} \SP \Big\{ \partial_t \SP \eta^r(x,t) + \del_{x_\alpha} q^r_{\alpha}(x,t) \Big\} \, dx = \int_{\Omega} \Bigl( -\frac{1}{\Delta t} D + Q+E\Bigr) \SP dx
\end{equation}
that monitors the time evolution of $\eta^r$. Here $D>0$ is the dissipation produced by the scheme, $Q$ is the term equivalent to $\eta^r$, and $E$ is the error term. The relative entropy identity is central to establishing the stability of the scheme and providing an error estimate; establishing the convergence of the approximates is the subject of future investigations.

% the convergence

\par

\section{Time-discrete variational approximation scheme }

\subsection{Time-discrete scheme and its stability}
In this section we briefly describe the semi-discrete variational scheme of Demoulini, Stuart and Tzavaras \cite{DST2} as well as the
result of the article \cite{MT12} in which to avoid inessential difficulties the authors work with periodic boundary conditions, {\it
i.e.}, the spatial domain $\Omega=\torus^3$ is taken to be the three-dimensional torus.

\par\medskip

The work \cite{DST2} uses extensively the properties of so-called null-Lagrangians. To this end we recall its definition:
\begin{definition} A continuous
function $L(F):\mdd{3} \to \RR$ is a {\it null-Lagrangian} if
\begin{equation}\label{NLDEF}
    \int_{\Omega} \, L(\nabla (u + \varphi )(x))\, dx = \int_{\Omega} \, L(\nabla u(x))\, dx
\end{equation}
for every bounded open set $\Omega \subset \RR^3$ and for all $u \in C^1(\bar{\Omega};\RR^3)$, $\varphi \in
C^{\infty}_0(\Omega;\RR^3)$.
\end{definition}

\par

It turns out that the components of ${\Phi}(F)$ defined in \eqref{POLYCONVEXITY} are null-Lagrangians and satisfy
\begin{equation}\label{SNLP}
\partial_{\alpha}\biggl(\pd{{\Phi}^A}{F_{i\alpha}}(\nabla{u})\biggr) =
0, \qquad A=1,\dots,19
\end{equation}
for any smooth $u(x):\RR^3 \to\RR^3$. Therefore, for smooth solutions $(v,F)$ of \eqref{PXELASTSYS}, the null-Lagrangians
${\Phi}^A(F)$ satisfy the transport identities \cite{DST}
\begin{equation}\label{NLTID}
\partial_t {\Phi}^A(F) = \partial_{\alpha} \biggl(\pd{{\Phi}^A}{F_{i\alpha}}(F)v_i
\biggr), \quad \forall F \;\; \mbox{with} \;\;
\partial_{\beta}F_{i\alpha}=\partial_{\alpha}F_{i\beta}.
\end{equation}
Due to the identities \eqref{NLTID} the system of polyconvex elastodynamics \eqref{PXELASTSYS} can be embedded into the
enlarged system \cite{DST}
\begin{equation}\label{EXTSYS}
\begin{aligned}
\partial_{t}v_i&=\partial_{\alpha}\biggl(\pd {G}{\xi_A}(\xi)
\,\,\pd{{\Phi}^{A}}{F_{i\alpha}}(F)\biggr)\\
\partial_{t}{\Phi}_{A}&=\partial_{\alpha}\biggl(\pd{{\Phi}^{A}}{F_{i
\alpha}}(F)\,v_{i}\biggr). \\
\end{aligned}
\end{equation}
The extension has the following properties:
\begin{itemize}

\item[(E\,1)] If $F(\cdot,0)$ is a gradient then $F(\cdot,t)$ remains a gradient $\forall t$.

\item[(E\,2)] If $F(\cdot,0)$ is a gradient and $\xi(\cdot,0)=\xi(F(\cdot,0))$, then $F(\cdot,t)$ remains a gradient and
    $\xi(\cdot,t)=\xi(F(\cdot,t))$, $\forall t$. In other words, the system of polyconvex elastodynamics can be viewed as a
    constrained evolution of \eqref{EXTSYS}.

\item[(E\,3)] The enlarged system admits a convex entropy
\begin{equation}\label{ENTDEF}
\eta(v,\xi) = \tfrac{1}{2} |v|^2 +G(\xi), \quad (v,\xi) \in
\RR^{22}
\end{equation}
and thus is symmetrizable (along the solutions that are gradients).
\end{itemize}

\par\smallskip

Based on the time-discretization of the enlarged system \eqref{EXTSYS} S. Demoulini, D.~M.~A. Stuart and A.~E. Tzavaras
\cite{DST} developed a variational approximation scheme which, for the given initial data
\begin{equation}\label{INITITERCLASS}
\Theta^0:=(v^0, \xi^0)=(v^0,F^0,Z^0,w^0) \in L^2 \times L^p \times
L^2 \times L^2
\end{equation}
and fixed time step $\tau >0$, constructs the sequence of successive iterates
\begin{equation}\label{GENITERCLASS}
\Theta^n:=(v^n, \xi^n)=(v^n,F^n,Z^n,w^n)  \in  L^2 \times L^p
\times L^2 \times L^2, \;\; n \geqslant 1
\end{equation}
with the following properties (see \cite[Lemma 1, Corollary 2]{DST2}):

\begin{itemize}

\item[(P\,1)] The iterate $(v^n,\xi^n)$ is the unique minimizer of the functional
\begin{equation*}
\mathcal{J}[v,\xi] = \int_{\mathbb{T}^3}
\biggl(\tfrac{1}{2}|v-v^{n-1}|^2 + G(\xi)\biggr) dx
\end{equation*}
over the weakly closed affine subspace
\begin{equation*}
\begin{aligned}
\mathcal{C} = \biggl\{ &(v,\xi)\in L^2 \times L^p \times L^2
\times
L^2: \; \mbox{such that} \; \forall \varphi \in C^{\infty}(\mathbb{T}^3)\\
&\quad \int_{\mathbb{T}^3} \biggl(\frac{\xi_{A} -
\xi^{n-1}_{A}}{\tau}\biggr) \varphi \,dx = - \int_{\mathbb{T}^3}
\biggl(\pd{{\Phi}^{A}}{F_{i\alpha}}(F^{n-1}) v_i\biggr) \partial_{\alpha} \varphi \, dx \biggr\}.\\
\end{aligned}
\end{equation*}

\item[(P\,2)] For each $n \geqslant 1$ the iterates satisfy the corresponding Euler-Lagrange equations
\begin{equation}\label{DISCEXTSYS}
\begin{aligned}
\frac{v^n_i-v^{n-1}_i}{\tau}&=\partial_{\alpha}\biggl(\pd{G}{\xi_{A}}(\xi^n)\,\,\pd{{\Phi}^{A}}{F_{i\alpha}}(F^{n-1})\biggr)\\
\frac{\xi^n_A-\xi^{n-1}_A}{\tau}&=\partial_{\alpha}\biggl(\pd{{\Phi}^{A}}{F_{i\alpha}}(F^{n-1})\,v^n_i\biggr)
\end{aligned} \qquad \mbox{in} \quad \cD'(\mathbb{T}^3).
\end{equation}

\item[(P\,3)] If $F^0$ is a gradient, then so is $F^n\,$, $\forall n \geqslant 1$.

\item[(P\,4)] Iterates $v^n$, $n\geqslant 1$ have higher regularity: $v^n \in W^{1,p}(\mathbb{T}^3)$, $\forall n\geqslant 1$.

\item[(P\,5)]There exists $E_0>0$ determined by the initial data such that
\begin{equation}\label{ITERBOUND}
\begin{aligned}
\sup_{n \geqslant \, 0} \Bigl( \| v^n\|^2_{L^2_{dx}}
+\int_{\mathbb{T}^3} G(\xi^n) \,dx \Bigr) + \sum_{n=1}^{\infty}
\|\Theta^n - \Theta^{n-1}\|_{L^2_{dx}}^2  \leqslant E_0.
\end{aligned}
\end{equation}

\end{itemize}

\subsection{Convergence of the time-discrete scheme}

In \cite{MT12} we established the direct convergence of time-continuous interpolates,
\begin{equation}\label{CONTINTPSM}
\begin{aligned}
\what{v}^{(\tau)}(t)&=\sum^{\infty}_{n=1}\Chi^n(t) \Bigl(v^{n-1}+\frac{t-\tau(n-1)}{\tau}(v^n-v^{n-1})\Bigr)\\
\what{\xi}^{(\tau)}(t)&=\bigl(\what{F}^{(\tau)},\what{Z}^{(\tau)},\what{w}^{(\tau)}\bigr)(t)\\
&=\sum^{\infty}_{n=1}\Chi^n(t)\Bigl(
\xi^{n-1}+\frac{t-\tau (n-1)}{\tau}(\xi^n - \xi^{n-1})\Bigr)\\
& \qquad \qquad \mbox{where} \quad \Chi^n(t) = \mathds{1}_{[(n-1)\tau,n\tau)}\,,
\end{aligned}
\end{equation}
constructed in the time-discrete scheme \eqref{DISCEXTSYS} to the solution of elastodynamics before shock formation and provided
the error estimate. The proof is based on the relative entropy method \cite{Dafermos10, DiPerna79} and provides an error estimate
for the approximation before the formation of shocks. This work is the first step towards numerical method used for practical
purposes (eg. computing solutions).

 \par

 To establish convergence we employed the relative entropy argument (see \cite{Dafermos10, DiPerna79}). We considered the relative entropy,
\begin{equation*}
%\begin{aligned}
%    \eta^r  \bigl( V^{(\tau)},{\Phi}^{(\tau)} \, \bigl| \, \bar {V} , \bar{{\Phi}}
%    \bigr)
    \eta^r
%    & = \eta\bigl( V^{(\tau)}, {\Phi}^{(\tau)} \bigr) - \eta\bigl( \bar{V}, \bar{{\Phi}} \bigr)
%    - D\eta \bigl( \bar {V}, \bar {\Phi} \bigr) \bigl(V^{(\tau)}-\bar{V},{\Phi}^{(\tau)} - \bar{{\Phi}}
%    \bigr)\,\\
\SP = \SP  \tfrac{1}{2}|\what{v}^{(\tau)}-{v}|^2+
\big[G(\what{\xi}^{(\tau)})-G({\xi})-\D_{\xi}G({\xi})(\what{\xi}^{(\tau)}-{\xi}) \big]\,,
%\end{aligned}
\end{equation*}
which estimates the difference between time-continuous interpolates $\big(\what{v}^{(\tau)},\what{\xi}^{(\tau)}\big)$ produced
by the scheme and a classical solution $\big({v},{\xi}\big)$ of the enlarged system, and derived the energy identity monitoring the
time evolution of $\eta^r$. I showed that (under appropriate assumptions for growth of $G$) the relative entropy $\eta^r$ satisfies
the identity
\begin{equation}\label{RENTIDSD}
{\partial_t \hspace{1pt} \eta^r-\mathrm{div}\,q^r
=Q-\tfrac{1}{\tau} D+S \quad \mbox{in}
\quad \cD'}
\end{equation}
which monitors its time evolution. Here $D>0$ is the dissipation generated by the scheme, $Q$ is the term equivalent to $\eta^r$,
and $E$ is the time discretization error.
The analysis of the identity %\eqref{RENTIDSD} %yielded the error estimate $\eta^r(t) \leqslant C (\eta^r(0) + \tau )$, $t\in [0,T]$,
%\begin{equation*}
%\begin{aligned}
%\eta^r(t) \leqslant C \,\Bigl(\eta^r(0) + \tau \Bigr),
%\quad t\in [0,T]\,.
%\end{aligned}
%\end{equation*}
%which, in turn,
yielded the main result: if $({v},{F})$ are smooth solutions of the elasticity equations \eqref{ELASTINTRO1} then
\begin{equation*}
 \sup_{t\in[0,T]}  \BBR{ \|\what{v}^{(\tau)} - {v} \|^2_{L^2(\mathbb{T}^3)} + \|\what{\xi}^{(\tau)} - {\Phi}({F}) \|^2_{L^2(\mathbb{T}^3)}  +
\|\what{F}^{(\tau)}-{F}\|^p_{L^p(\mathbb{T}^3)} } = O(\tau).
\end{equation*}

\section{Fully-discrete variational approximation scheme}

\subsection{Stored energy assumptions}

We consider polyconvex stored energy ${W}:\mddplus{3} \to \RR$
\begin{equation}\vspace{-4pt}\label{POLYCONVEXITY2}
{W}(F)=G\circ{\Phi}(F)\,, \quad {\Phi}(F) := (F, \cof{F}, \det{F})
\end{equation}
with
\begin{equation*}
G=G(\xi)=G(F,Z,w):\mdd{3}\times\mdd{3}\times\RR \,\cong\, \RR^{19}
\to \RR \quad \mbox{uniformly convex.}
\end{equation*}

\par
We work with periodic boundary conditions, that is, the spatial domain $\Omega$ is taken to be {the} three dimensional torus
$\mathbb{T}^3$. The indices $i,\alpha, \dots$ generally run over $1,\dots,3$ while $A,B,\dots$ run over $1,\dots,19$. We use the
notation $L^p=L^p(\mathbb{T}^3)$ and $W^{1,p}=W^{1,p}(\mathbb{T}^3)$. Finally, we impose the following convexity and
growth assumptions on $G$:
\begin{itemize}

\item[(H1)] $G\in C^3(\mdd{3}\times \mdd{3} \times \RR; [0,\infty))$ is of the form
\begin{equation}\label{GDECOMP}
G(\xi)=H(F) + R(\xi)
\end{equation}
with $H\in C^3(\mdd{3}; [0,\infty))$ and $R\in C^3(\mdd{3}\times \mdd{3} \times \RR; [0,\infty))$ strictly convex satisfying
\begin{equation*}
\kappa |F|^{p-2}|z|^{2} \, \leqslant \, z^{T}\nabla^2H(F)z \,
\leqslant \, \kappa' |F|^{p-2}|z|^{2}, \,\,\, \forall z \in \RR^9
\end{equation*}
and $\gamma I  \leqslant  \nabla^2R \leqslant \gamma' I\,$ for some fixed $\gamma,\gamma',\kappa,\kappa'>0$ and $p\in
(6,\infty)$.

\item[(H2)] $G(\xi)\,\geqslant\,c_1|F|^p+c_2|Z|^2+c_3|w|^2-c_4$.

\item[(H3)] $G(\xi)\,\leqslant\,c_5(|F|^p+|Z|^2+|w|^2+1)$.

\item[(H4)] $|G_{F}|^{\frac{p}{p-1}}+ |G_Z|^{\frac{p}{p-2}}+|G_w|^{\frac{p}{p-3}}
    \,\leqslant\,c_6\BBR{|F|^p+|Z|^2+|w|^2+1}.$

\item[(H5)]
%$\BBA{\frac{\partial^2H}{\partial{F_{i\alpha}}\partial{F_{ml}}}}\leqslant c_7 |F|^{p-2}$
$\BBA{\frac{\partial^3H}{\partial{F_{i\alpha}}\partial{F_{ml}}\partial{F_{rs}}}} \leqslant c_7 |F|^{p-3}$ and
$\BBA{\frac{\partial^3R}{\partial{{\Phi}_A}
\partial{{\Phi}_B} \partial{{\Phi}_D}}} \leqslant c_8$.

\end{itemize}\par\medskip

\noindent {\bf Notations.} To simplify notation we write
\begin{equation*}
\begin{aligned}
G_{,A}\,(\xi)&=\pd{G}{\xi_{A}}(\xi),& \quad
R_{,A}\,(\xi)&=\pd{R}{\xi_{A}}(\xi),&\\
H_{,i\alpha}\,(F)&=\pd{H}{F_{i\alpha}}(F),& \quad
{\Phi}^{A}_{,i\alpha}\,(F)&=\pd{{\Phi}^{A}}{F_{i\alpha}}(F).&
\end{aligned}
\end{equation*}
For each $i,\alpha =1,\dots,3$ we set
\begin{equation}\label{GCOMPFLD}
\begin{aligned}
g_{i\alpha}(\xi,\tilde{F})=\pd{G}{{\Phi}_{A}}\,(\xi)
\,\pd{{\Phi}^{A}}{F_{i\alpha}}\,(\tilde{F})=G_{,A}(\xi) \SP
{\Phi}^A_{,i\alpha}(\tilde{F})
\end{aligned}
\end{equation}
for $\xi=(F,Z,w) \in \RR^{19}\,, \tilde{F} \in \RR^9$ (where we use {the} summation convention over repeated indices)
%\begin{equation}\label{GCOMPFLD}
%\begin{aligned}
%g_{i\alpha}({\Phi},\hat{F})=&\pd{G}{{\Phi}_{A}}\,({\Phi})
%\,\pd{\Phi^{A}}{F_{i\alpha}}\,(\hat{F})\\
%=&\pd{G}{F_{i\alpha}}({\Phi})+\pd{G}{Z_{k\gamma}}({\Phi})\,
%\varepsilon_{imk}\,\varepsilon_{\alpha\beta\gamma}\,
%\hat{F}_{m\beta}+\bigl(\cof{\hat{F}}\bigr)_{i\alpha}\pd{G}{w}\;({\Phi})
%\end{aligned}
%\end{equation}
and set the corresponding fields by
\begin{equation}\label{GFLD}
g_i: \RR^{19}\times\RR^9\to\RR^3,  \quad
g_i(\xi,\tilde{F}):=(g_{i1},g_{i2},g_{i3})(\xi,\tilde{F}).
\end{equation}

{
\begin{remark}\rm
The hypothesis (H1) can be replaced with a more general one
\begin{equation}\label{ff}\tag{H1$'$}
G(F)=H_1(F)+H_2(Z)+H_3(w)+R(\xi)
\end{equation}
which leads to a delicate error estimation analysis.
\end{remark} }

\subsection{Motivation for the scheme}

In this section we consider fully-discrete scheme induced by the equations \eqref{DISCEXTSYS}. We first prove an elementary
lemma that highlights some of the properties of null-Lagrangians ${\Phi}(F)$:
\begin{lemma}[\textbf{null-Lagrangian properties}]\label{DIVNLLMM}
Let  $q>2$ and $r \geqslant \tfrac{q}{q-2}$. Then, if
\begin{equation*}
  u \in
W^{1,q}(\mathbb{T}^3;\RR^3)\,, \quad z\in W^{1,r}(\mathbb{T}^3)
\end{equation*}
we have
\begin{equation}\label{NLPROPEXT}
\begin{aligned}
\partial_{\alpha}\biggl(\frac{\del {\Phi}^A}{\del F_{i\alpha}}\BBR{\nabla{u}}\biggr)&=0\\
\partial_{\alpha}\biggl(\frac{\del{\Phi}^A}{\del F_{i\alpha}}(\nabla{u})z\biggr) &= \frac{\del{\Phi}^A}{\del F_{i\alpha}}(\nabla{u})\,\partial_{\alpha}z
\end{aligned} \quad \mbox{in} \quad \cD'(\mathbb{T}^3)\\
\end{equation}
for each $i=1,\dots,3$ and $A=1,\dots,19$.
\end{lemma}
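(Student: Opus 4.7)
The plan is to prove both identities by mollification, relying on the fact that at the smooth level the first identity is precisely (SNLP) and the second follows from it via the Leibniz rule:
\[
\partial_{\alpha}\bigl({\Phi}^{A}_{,i\alpha}(\nabla u)\,z\bigr)
= \partial_{\alpha}\bigl({\Phi}^{A}_{,i\alpha}(\nabla u)\bigr)\,z + {\Phi}^{A}_{,i\alpha}(\nabla u)\,\partial_{\alpha}z
= {\Phi}^{A}_{,i\alpha}(\nabla u)\,\partial_{\alpha}z.
\]
The real content of the lemma is therefore the justification of these identities in $\cD'(\mathbb{T}^3)$ at the stated Sobolev regularity.

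First I would mollify on the torus to obtain $u_{\varepsilon}\in C^{\infty}(\mathbb{T}^3;\RR^3)$ and $z_{\varepsilon}\in C^{\infty}(\mathbb{T}^3)$ with $u_{\varepsilon}\to u$ in $W^{1,q}$ and $z_{\varepsilon}\to z$ in $W^{1,r}$. The key structural observation is that $F\mapsto {\Phi}^{A}_{,i\alpha}(F)$ is a polynomial in the entries of $F$ of degree at most two (degree $0$ from the linear block, degree $1$ from the cofactor, degree $2$ from the determinant), so one has the pointwise bound $|{\Phi}^{A}_{,i\alpha}(F)|\leq C(1+|F|^2)$. Since $q>2$, standard Nemitsky continuity on $L^p$ spaces then yields
\[
{\Phi}^{A}_{,i\alpha}(\nabla u_{\varepsilon}) \longrightarrow {\Phi}^{A}_{,i\alpha}(\nabla u) \quad \text{in } L^{q/2}(\mathbb{T}^3),
\]
which is a genuine (stronger than $L^1$) convergence precisely because $q>2$.

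For the first identity I would test against $\varphi\in C^{\infty}(\mathbb{T}^3)$ and pass to the limit in
\[
0 = \int_{\mathbb{T}^3} {\Phi}^{A}_{,i\alpha}(\nabla u_{\varepsilon})\,\partial_{\alpha}\varphi\,dx,
\]
using the $L^{q/2}$ convergence above against the bounded factor $\partial_{\alpha}\varphi$. For the second identity, the smooth-level equality obtained from integration by parts together with the first identity applied to $u_{\varepsilon}$,
\[
-\int_{\mathbb{T}^3} {\Phi}^{A}_{,i\alpha}(\nabla u_{\varepsilon})\,z_{\varepsilon}\,\partial_{\alpha}\varphi\,dx
= \int_{\mathbb{T}^3} {\Phi}^{A}_{,i\alpha}(\nabla u_{\varepsilon})\,\partial_{\alpha}z_{\varepsilon}\,\varphi\,dx,
\]
is then passed to the limit by H\"older with the conjugate exponents $q/2$ and $q/(q-2)$; the hypothesis $r\geq q/(q-2)$ together with the embedding $W^{1,r}(\mathbb{T}^3)\hookrightarrow L^{r}(\mathbb{T}^3)\subset L^{q/(q-2)}(\mathbb{T}^3)$ ensures that both $z$ and $\partial_{\alpha}z$ lie in $L^{q/(q-2)}(\mathbb{T}^3)$, so each side is uniformly $L^1$-convergent against the bounded factors $\partial_\alpha\varphi$ and $\varphi$ respectively.

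The main subtlety is matching the exponents so that all products make sense and the Nemitsky map closes: the strict inequality $q>2$ is needed to keep ${\Phi}^{A}_{,i\alpha}(\nabla u)$ in $L^{q/2}$ with $q/2>1$ (the worst case being the quadratic $\det$-contribution), while the sharp threshold $r\geq q/(q-2)$ is exactly the H\"older-conjugate relation $\tfrac{2}{q}+\tfrac{1}{r}\leq 1$ required to define the distributional products on both sides of the second identity.
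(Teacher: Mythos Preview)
Your proof is correct and follows essentially the same approach as the paper: both rely on the quadratic growth bound $|\Phi^A_{,i\alpha}(F)|\leq C(1+|F|^2)$ to place $\Phi^A_{,i\alpha}(\nabla u)$ in $L^{q/2}$, and then pass from the smooth identity \eqref{SNLP} to the weak one by density, using the H\"older conjugacy $\tfrac{2}{q}+\tfrac{1}{r}\leq 1$. The only organizational difference is that the paper, rather than mollifying $u$ and $z$ simultaneously for the second identity, first establishes \eqref{NLPROPEXT}$_1$ by density and then deduces \eqref{NLPROPEXT}$_2$ directly by writing $z\,\partial_\alpha\varphi = \partial_\alpha(z\varphi) - (\partial_\alpha z)\varphi$ and applying \eqref{NLPROPEXT}$_1$ with $z\varphi$ as test function (justified again by density since $z\varphi\in W^{1,r}$).
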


\begin{proof}
Observe that
\begin{equation*}
\Phi_{,i \alpha}(\nabla u) \leqslant 1 +|\nabla u|+|\nabla u|^2
\quad  \Rightarrow \quad \PSD(\gr{u})\in L^{q/2}(\torus^3).
\end{equation*}
Hence by \eqref{SNLP} and the density argument we get \eqref{NLPROPEXT}$_1$. Next, notice that
\begin{equation*}
\PSD(\gr{u})z, \;\; \PSD(\gr{u})\,\partial_{\alpha}z \in
L^1(\torus^3).
\end{equation*}
Then taking arbitrary $\varphi \in C^{\infty}(\torus^3)$ we obtain
\begin{equation*}\vspace{3pt}
\begin{aligned}
\int_{\torus^3} \Bigl(\PSD&
(\gr{u})\,z\Bigr)\partial_{\alpha}\varphi \,dx \\
&=\int_{\torus^3}
\Bigl(\PSD(\gr{u})\Bigr)\partial_{\alpha}(z\,\varphi)\,dx -
\int_{\torus^3}\Bigl(\PSD(\gr{u})\,\partial_{\alpha}z\Bigr)\varphi\,dx = I_1-I_2.\\
\end{aligned}
\end{equation*}
Since $z\varphi\in W_0^{1,r} \bigcap W^{1,q^{*}}$, the property \eqref{NLPROPEXT}$_1$ and the density argument imply
$I_1=0$ and hence
\begin{equation*}
\int_{\torus^3} \Bigl(\PSD
(\gr{u})\,z\Bigr)\partial_{\alpha}\varphi\,dx = -I_2 =
\int_{\torus^3}\Bigl(\PSD(\gr{u})\,\partial_{\alpha}z\Bigr)\varphi\,dx.
\end{equation*}
\end{proof}

Using the above lemma and the properties (P3) and (P4), we conclude that the spatial iterates $v^n,\xi^n$ constructed in
\eqref{DISCEXTSYS} solve the system
\begin{equation}\label{DISCEXTSYS2}
\begin{aligned}
\frac{v^n_i-v^{n-1}_i}{\tau}&=\partial_{\alpha}\biggl(\pd{G}{\xi_{A}}(\xi^n)\,\,\pd{{\Phi}^{A}}{F_{i\alpha}}(F^{n-1})\biggr)\\
\frac{\xi^n_A-\xi^{n-1}_A}{\tau}&=\frac{\partial{\Phi}^A}{\partial F_{i\alpha}}(F^{n-1})
\SP \del_{x_{\alpha}}{v_i^n}
\end{aligned} \qquad \mbox{in} \quad \cD'(\mathbb{T}^3).
\end{equation}
which in the shorter form can be expressed
\begin{equation}\label{DISCEXTSYS2S}
\begin{aligned}
\biggl( \frac{v^n - v^{n-1}}{\tau} , \varphi \biggr) & = - \bigl(
DG(\xi^n), D{\Phi}(F^{n-1}) \nabla \varphi \bigr), &\varphi \in
C^{\infty}\bigl( \torus^3 ; \SP \RR^3
\bigr)& \\[3pt]
\biggl( \frac{\xi^n - \xi^{n-1}}{\tau} , \psi \biggr) &=
\bigl(D{\Phi}(F^{n-1}) \nabla v^n, \psi \bigr),  &\psi \in
C^{\infty}\bigl( \torus^3 ; \SP \RR^{19} \bigr)&.
\end{aligned}
\end{equation}

\par\smallskip

\begin{remark}\rm
The system \eqref{DISCEXTSYS2} is equivalent to \eqref{DISCEXTSYS} for smooth solutions or functions satisfying (P3)-(P4), but in
a distributional sense they are not equivalent. Observe that the product of a function and (possibly) a measure on the right-hand
side of the second equation \eqref{DISCEXTSYS2} may not be defined unless we require $v$ to have a better regularity.
\end{remark}

\subsection{Fully-discrete scheme and stability}

Based on the previous discussion let us investigate a possibility for a fully-discrete scheme based on \eqref{DISCEXTSYS2S}. As
before, let $\tau>0$ be fixed time-step and $h>0$ correspond to a space-step. Set spaces
\begin{equation}
\begin{aligned}
U_h &= \Bigl\{ \varphi_h \in C\bigl(\torus^3; \SP \RR^3\bigr): \:
\varphi_h|_K \in [\mathcal{P}_k(K)]^3,\: K\in\mathcal{T}_h(\torus^3) \Bigr\}\\[6pt]
H^F_h &= \Bigl\{ A_h \in L^2(\torus^3; \SP \mdd{3}): \:
A_h|_K \in [\mathcal{P}_{k-1}(K)]^9,\: K\in\mathcal{T}_h(\torus^3) \Bigr\}\\[6pt]
H^Z_h &= \Bigl\{ B_h \in L^2(\torus^3; \SP \mdd{3}): \:
B_h|_K \in [\mathcal{P}_{2(k-1)}(K)]^9,\: K\in\mathcal{T}_h(\torus^3) \Bigr\}\\[6pt]
H^w_h &= \Bigl\{ d_h \in L^2(\torus^3): \: d_h|_K \in
\mathcal{P}_{3(k-1)}(K),\: K\in\mathcal{T}_h(\torus^3) \Bigr\}.
\end{aligned}
\end{equation}
and let $\PP^U$, $\rm \PP^F$, $\rm \PP^Z$ and $\rm \PP^w$ denote the standard orthogonal projectors,
\begin{equation}\label{ORTHPROJ}
\begin{aligned}
    {\rm{\mathbb{P}}^U}&: L^2(\torus^3; \SP \RR^3) \, \to \, U_h\\
    {\rm{\mathbb{P}}^F}&: L^2(\torus^3; \SP \mdd{3}) \, \to \, H^F_h\\
    {\rm{\mathbb{P}}^Z}&: L^2(\torus^3; \SP \mdd{3}) \, \to \, H^Z_h\\
    {\rm{\mathbb{P}}^w}&: L^2(\torus^3 ) \, \to \, H^w_h,
\end{aligned}
\end{equation}
and set the operator ${\rm \mathbb{P}^H} := ({\rm{\mathbb{P}}^F}, {\rm{\mathbb{P}}^Z}, {\rm{\mathbb{P}}^w})$.

\par\medskip

Consider the following {\it fully-discrete scheme}: Given the $(n-1)$-st iterate
\begin{equation*}
(v^{n-1}_h,\xi^{n-1}_h) = (v^{n-1}_h,
F^{n-1}_h,Z^{n-1}_h,w^{n-1}_h) \in U_h \times H^{F}_h \times
H^{Z}_h \times H^{w}_h
\end{equation*}
find the $n$-th iterate
\begin{equation*}
(v^{n}_h,\xi^{n}_h) = (v^{n}_h, F^{n}_h,Z^{n}_h,w^{n}_h) \in U_h
\times H^{F}_h \times H^{Z}_h \times H^{w}_h
\end{equation*}
by solving
\begin{equation}\label{FDISCEXTSYSNONDIV_SHORT}
\begin{aligned}
\biggl( \frac{v^n_h - v^{n-1}_h}{\tau} , \varphi_h \biggr) & = -
\bigl( DG(\xi^n_h), D{\Phi}(F^{n-1}_h) \nabla \varphi_h \bigr),
\\[3pt]
\biggl( \frac{\xi^n_h - \xi^{n-1}_h}{\tau} , \psi_h \biggr) &=
\bigl(D{\Phi}(F^{n-1}) \nabla v^n, \psi_h \bigr)
\end{aligned}
\end{equation}
for all $\varphi_h \in U_h$, $\psi_h \in H^F_h \times H^Z_h \times H^w_h$.

\par\medskip

Assuming that we are able to solve \eqref{FDISCEXTSYSNONDIV_SHORT} we will try to establish {\it a priori} estimates similar to
those in \eqref{ITERBOUND}. By assumption
\begin{equation}\label{CONSTRPROP1}
\xi^n_h - \xi^{n-1}_h \in H^F_h \times H^Z_h \times H^w_h
\end{equation}
and hence
\begin{equation}\label{PROJPROP1}
\Bigl(\xi^n_h - \xi^{n-1}_h, \, {\rm{\mathbb{P}^H}} DG(\xi^n_h)
\Bigr) = \Bigl(\xi^n_h - \xi^{n-1}_h, \, DG(\xi^n_h) \Bigr).
\end{equation}

\par\medskip

Next, recall that ${\Phi}(F) = (F, \cof{F}, \det{F})$ and therefore
\begin{equation*}
 \begin{aligned}
 \frac{\del F_{i\alpha}}{\del F_{j\beta}} = \delta_{ij}
 \delta_{\alpha\beta}, \quad
  \frac{\del \SP (\cof{F})_{i\alpha}}{\del F_{j\beta}}
= \eps_{ijk} \eps_{\alpha\beta\gamma}F_{k\gamma}, \quad \frac{\del
\det{F}}{\del F_{j\beta}}  = (\cof(F))_{j\beta}
 \end{aligned}
\end{equation*}
for each $i,j, \alpha,\beta = 1,\dots 3$. Then, in view of the fact that
\begin{equation*}
F^{n-1}_h \in H^F_h,\, \cof(F^{n-1}_h) \in H^Z_h \quad and \quad
\nabla v^n_h \in H^F_h,
\end{equation*}
we conclude that\vspace{3pt}
\begin{equation*}\vspace{3pt}
\begin{aligned}
&\biggl[\frac{\del ({\Phi}^1,{\Phi}^2,\dots,{\Phi}^9)}{\del
F}(F^{n-1}_h)\biggl] \nabla v^n_h \, = \, \nabla v^n_h  \, \in \,
H^F_h\\[3pt]
&\quad\qquad\qquad\biggl[\frac{\del
({\Phi}^{10},{\Phi}^2,\dots,{\Phi}^{18})}{\del F}(F^{n-1}_h)\biggl] \nabla
v^n_h \,
\in  \, H^Z_h\\[3pt]
&\quad\qquad \qquad\qquad\biggl[\frac{\del {\Phi}^{19}}{\del
F}(F^{n-1}_h)\biggl] \nabla
v^n_h \, = \, \cof{F}:\nabla v^n_h \, \in \, H^w_h.\\[3pt]
\end{aligned}
\end{equation*}
Thus \vspace{3pt}
\begin{equation}\label{CONSTRPROP2}%\label{DXIDV_SPACE}
\begin{aligned}
%\biggl( \frac{\del {\Phi}^1}{\del F}(F^{n-1}_h) : \nabla v^n_h, \,
%\dots,  \, & \frac{\del {\Phi}^{19}}{\del F}(F^{n-1}_h)  : \nabla
%v^n_h\biggr) \\[3pt] & \,= \,
D{\Phi}(F^{n-1}_h) \nabla v^n_h \, \in \, H^F \times H^Z \times H^w
\end{aligned}
\end{equation}
and therefore
\begin{equation}\label{PROJPROP2}
\Bigl( D{\Phi}(F^{n-1}_h) \nabla v^n_h , {\rm \mathbb{P}^H}
DG(\xi^n_h) \Bigr) \,= \, \Bigl( D{\Phi}(F^{n-1}_h) \nabla v^n_h ,
 DG(\xi^n_h) \Bigr).
\end{equation}

\par\medskip

Next, set $\varphi_h = v^n_h \SP$, $\psi_h = {\rm\mathbb{P}^H} DG(\xi^n)$ and apply it to \eqref{FDISCEXTSYSNONDIV_SHORT}.
It leads to
\begin{equation*}
\begin{aligned}
\Bigl( {v^n_h - v^{n-1}_h} , \nabla v^n_h \Bigr) & = - \tau \Bigl(
DG(\xi^n_h), D{\Phi}(F^{n-1}_h) \nabla v^n_h \Bigr)
\\[3pt]
\Bigl( {\xi^n_h - \xi^{n-1}_h}, {\rm\mathbb{P}^H} DG(\xi^n) \Bigr)
&= \tau \Bigl(D{\Phi}(F^{n-1}) \nabla v^n, {\rm\mathbb{P}^H}
DG(\xi^n) \Bigr)
\end{aligned}
\end{equation*}
which, in view of \eqref{PROJPROP1} -- \eqref{PROJPROP2}, implies
\begin{equation*}
\begin{aligned}
\Bigl( {v^n_h - v^{n-1}_h} , \nabla v^n_h \Bigr) + \tau \Bigl(
DG(\xi^n_h), \xi^n_h-\xi^{n-1}_h \Bigr) = 0.
\end{aligned}
\end{equation*}
The above identity can be rewritten as
\begin{equation*}
\begin{aligned}
\tfrac{1}{2} \BBN{v^n_h}_{L^2(\torus^3)}^2 + \tfrac{1}{2}
\BBN{v^n_h-v^{n-1}_h}_{L^2(\torus^3)}^2 + \bigl(DG(\xi^n_h),
\xi^n_h - \xi^{n-1}_h \bigr) =
\tfrac{1}{2}\BBN{v^{n-1}_h}_{L^2(\torus^3)}^2
\end{aligned}
\end{equation*}
and hence, by the convexity of $G$, we get an {\it a priori} estimate
\begin{equation}\label{STABEST2}
\begin{aligned}
\tfrac{1}{2} \BBN{v^n_h}_{L^2(\torus^3)}^2 + \tfrac{1}{2}
\BBN{v^n_h-v^{n-1}_h}_{L^2(\torus^3)}^2 + &\int_{\torus^3}
\,G(\xi^n_h)\,dx \\[2pt]
\,\leqslant \, \tfrac{1}{2}\BBN{v^{n-1}_h}_{L^2(\torus^3)}^2 +
&\int_{\torus^3} \,G(\xi^{n-1}_h)\,dx .
\end{aligned}
\end{equation}

\par\smallskip

\begin{remark}\rm One actually may avoid introducing orthogonal projector
operator (directly). Notice that \eqref{FDISCEXTSYSNONDIV_SHORT} implies that
\begin{equation*}
\frac{1}{\tau} \bigl(\xi^n_h - \xi^{n-1}_h\bigr) - D{\Phi}(F^{n-1}_h)
\nabla v^n_h \, \in \, \bigl( H^F_h \times H^Z_h \times H^w_h
\bigr)^{\perp}
\end{equation*}
while by \eqref{CONSTRPROP1} and \eqref{CONSTRPROP2} we have
\begin{equation*}
\frac{1}{\tau} \bigl(\xi^n_h - \xi^{n-1}_h\bigr) - D{\Phi}(F^{n-1}_h)
\nabla v^n_h \, \in \, H^F_h \times H^Z_h \times H^w_h.
\end{equation*}
This tells us that
\begin{equation}\label{CONSTRPROP3}
\frac{1}{\tau} \bigl(\xi^n_h - \xi^{n-1}_h\bigr) - D{\Phi}(F^{n-1}_h)
\nabla v^n_h \, = \, 0
\end{equation}
and hence, setting $\varphi_h = v^n_h \DSP$ in \eqref{FDISCEXTSYSNONDIV_SHORT}$_1$ and using the above identity, we get
the same estimate as before. \par\medskip

Finally, notice that the first nine identities in \eqref{CONSTRPROP3} are
\begin{equation}\label{CONSTRPROP4}
\frac{1}{\tau} \bigl(F^n_h - F^{n-1}_h\bigr) - \nabla v^n_h \, =
\, 0
\end{equation}
which suggest that if $F^{n-1}_h$ is a gradient then $F^{n}$ is a gradient, a very useful property if one needs to exploit
null-Lagrangian structure; see, e.g., \cite{MT12}.

\end{remark}

\section{Relative entropy identity}

\subsection{Relative entropy identity in the smooth regime} \label{SENERGYMOTIV}

In this section we derive the relative entropy identity among the two {\it smooth} solutions. This will help to grasp the main idea
behind the calculations and, in addition, explain the need for hypotheses (H1)-(H5) on the stored energy ${W}(F)$; see Section
\eqref{SENERGYMOTIV}. Thus, suppose that
\begin{equation*}
({\what{v}},{\what{\xi}})=({\what{v}},{\what{F}},{\what{Z}},{\what{w}}), \quad ({v},{\xi})=({v},{F},Z,w)
\end{equation*}
are two smooth solutions to the extended system \eqref{EXTSYS} with ${\what{F}}(\cdot,0)$, ${F}(\cdot,0)$ gradients. Define the
relative entropy among the two solutions by
\begin{equation}\label{RENTDEFINTRO}
\eta^r({\what{v}},{\what{\xi}}; \SP {v}, {\xi}):=
\eta({\what{v}},{\what{\xi}})-\eta({v},{\xi})-\D\eta({v},{\xi})({\what{v}}-{v},{\what{\xi}}-{\xi})\\
\end{equation}
with $\eta$ given by \eqref{ENTDEF}. The relative flux in this case will turn out to be
\begin{equation}\label{RFLUXINTRO}
q^r_d({\what{v}},{\what{\xi}}; \SP {v},{\xi}):= \Bigl(G_{,A}({\what{\xi}}) - G_{,A}({\xi})\Bigr)\bigl( {\what{v}}_i - {v}_i\bigr)
{\Phi}_{,i\alpha}({\what{F}}).
\end{equation}

\begin{remark}\label{NONSYM}
Note that \eqref{RENTDEFINTRO}, \eqref{RFLUXINTRO} are not symmetrical. Usually, in this type of calculations,
$({\what{v}},{\what{\xi}})$ denotes a non-smooth solution which is compared to the smooth $({v},{\xi})$. The definition
\eqref{RENTDEFINTRO} ensures that one evaluates gradient $\D\eta$ at the smooth solution to avoid computing the time derivative
at the shock (since $\D \eta$ appears in the identity for $\del_t \eta^r$). Also note that the definition of the relative flux
$q^r_{\alpha}$ is usually not known in advance and is simply a consequence of computations.
\end{remark}

\par\smallskip

\begin{lemma}\label{RENTIDSMOOTHLMM} Let $({\what{v}},{\what{\xi}})$ and $({v},{\xi})$ be smooth solutions of \eqref{EXTSYS}.
Then
\begin{equation}\label{RENTIDSMOOTH}
    \del_t \SP \eta^r + \del_{\alpha} \SP q^r_{\alpha} = Q
\end{equation}
where the term $Q$ is "quadratic" of the form
\begin{equation}\label{QDEFSMOOTH}
\begin{aligned}
Q({\what{v}},{\what{\xi}}; \SP {v},{\xi} ) &:= \partial_{\alpha}{v}_i \Bigl(G_{,A}({\what{\xi}})-G_{,A}({\xi})\Bigr)\Bigl({\Phi}_{,i\alpha}^A({\what{F}})-{\Phi}_{,i\alpha}^A({F})\Bigr)\\
&\quad  + \partial_{\alpha}(G_{,A}({\xi})) \Bigl({\Phi}_{,i\alpha}^A({\what{F}})-{\Phi}_{,i\alpha}^A({F})\Bigr) \Bigl({\what{v}}_i-{v}_i\Bigr)\\
&\quad  + \partial_{\alpha}{v}_i \Bigl(G_{,A}({\what{\xi}})-G_{,A}({\xi})-G_{,AB}({\xi})({\what{\xi}}-{\xi})_B
\Bigr){\Phi}_{,i\alpha}^A({F}).\\
\end{aligned}
\end{equation}
\end{lemma}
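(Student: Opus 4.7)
The proof is a direct calculation, in the spirit of the classical relative entropy method for hyperbolic systems, exploiting (i) the quadratic-type structure of $\eta$, (ii) the extended system \eqref{EXTSYS} for both solutions, and (iii) the null-Lagrangian identities $\partial_\alpha \Phi^A_{,i\alpha}(F) = \partial_\alpha \Phi^A_{,i\alpha}(\hat F) = 0$, which are legal because $F(\cdot,t)$ and $\hat F(\cdot,t)$ remain gradients by property (E1) and Lemma \ref{DIVNLLMM} applies.

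First, I would differentiate the definition of $\eta^r$ in time. Since $D\eta(v,\xi) = (v,\,G_{,A}(\xi))$, the product rule collapses to
\begin{equation*}
\partial_t \eta^r = (\hat v_i - v_i)(\partial_t \hat v_i - \partial_t v_i) + \bigl(G_{,A}(\hat\xi) - G_{,A}(\xi)\bigr)\partial_t \hat\xi_A - G_{,AB}(\xi)(\hat\xi_A - \xi_A)\partial_t \xi_B,
\end{equation*}
and I would then substitute $\partial_t \hat v$, $\partial_t v$, $\partial_t \hat\xi$, $\partial_t \xi$ from \eqref{EXTSYS}. At this stage every right-hand side has the form $\partial_\alpha(G_{,A}(\cdot)\Phi^A_{,i\alpha}(\cdot))$ or $\partial_\alpha(\Phi^A_{,i\alpha}(\cdot)\,w_i)$; applying $\partial_\alpha\Phi^A_{,i\alpha}=0$ strips the unwanted pieces, leaving expressions in which only $\partial_\alpha G_{,A}$, $\partial_\alpha \hat v_i$, and $\partial_\alpha v_i$ appear, each multiplied by some undifferentiated $\Phi^A_{,i\alpha}$.

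Next, I would compute $\partial_\alpha q^r_\alpha$ for $q^r_\alpha = (G_{,A}(\hat\xi) - G_{,A}(\xi))(\hat v_i - v_i)\Phi^A_{,i\alpha}(\hat F)$ by the product rule, again using $\partial_\alpha \Phi^A_{,i\alpha}(\hat F)=0$ to kill the last factor. Subtracting (or adding, depending on the sign convention in \eqref{RENTIDSMOOTH}), the two terms of the form $\partial_\alpha G_{,A}(\hat\xi)(\hat v_i - v_i)\Phi^A_{,i\alpha}(\hat F)$ cancel by antisymmetry, and the two terms of the form $(G_{,A}(\hat\xi) - G_{,A}(\xi))\Phi^A_{,i\alpha}(\hat F)\partial_\alpha \hat v_i$ cancel as well. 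What survives is
\begin{equation*}
\partial_\alpha G_{,A}(\xi)\bigl[\Phi^A_{,i\alpha}(\hat F) - \Phi^A_{,i\alpha}(F)\bigr](\hat v_i - v_i) + \partial_\alpha v_i\Bigl[(G_{,A}(\hat\xi) - G_{,A}(\xi))\Phi^A_{,i\alpha}(\hat F) - G_{,AB}(\xi)(\hat\xi - \xi)_B \Phi^A_{,i\alpha}(F)\Bigr],
\end{equation*}
which is exactly the second summand of $Q$ plus the sum of the first and third summands after one adds and subtracts $(G_{,A}(\hat\xi) - G_{,A}(\xi))\Phi^A_{,i\alpha}(F)\partial_\alpha v_i$ in the bracketed expression.

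The only real obstacle is bookkeeping: roughly a dozen terms must be paired correctly, and the algebra works cleanly only if one postpones expanding $\partial_\alpha G_{,A}$ and $\partial_\alpha \Phi^A_{,i\alpha}$ until \emph{after} the cancellations between $\partial_t \eta^r$ and $\partial_\alpha q^r_\alpha$ are made, so that the antisymmetric pieces annihilate before the null-Lagrangian identity is invoked. The trick of adding and subtracting $\Phi^A_{,i\alpha}(F)$ at the final step is what converts the algebraically ``mixed'' residue into the clean combination of the three quadratic pieces displayed in \eqref{QDEFSMOOTH}: the first being quadratic in the differences of $G_{,A}$ and of $\Phi$, the second quadratic in the differences of $\Phi$ and of $v$, and the third picking up the second-order Taylor remainder of $G_{,A}$ around $\xi$.
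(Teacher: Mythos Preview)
Your proposal is correct and follows essentially the same approach as the paper: both are direct relative-entropy computations that substitute \eqref{EXTSYS} for the time derivatives, invoke the null-Lagrangian identity $\partial_\alpha \Phi^A_{,i\alpha}=0$ (valid because $F,\hat F$ remain gradients), and then rearrange terms---the only cosmetic difference is that the paper computes $\partial_t\eta(\hat v,\hat\xi)$ and $\partial_t\bigl(\eta(v,\xi)+D\eta(v,\xi)(\hat\Theta-\Theta)\bigr)$ separately and lets the flux $q^r_\alpha$ emerge from the subtraction, whereas you differentiate $\eta^r$ in one stroke and match against the already-given $q^r_\alpha$. The final add-and-subtract of $\Phi^A_{,i\alpha}(F)$ to split the residue into the three pieces of $Q$ is the same in both.
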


\begin{proof}
Since $({\what{v}},{\what{\xi}})$ is a smooth solution to \eqref{EXTSYS} we have
\begin{equation*}
\begin{aligned}
\del_t \SP \eta({\what{v}},{\what{\xi}}) &= \del_t \Bigl( \SP \tfrac{1}{2}|{\what{v}}|^2 + G({\what{\xi}}) \SP \Bigr)={\what{v}}_i \del_t {\what{v}}_i + G_{,A}({\what{\xi}}) \del_t
{\what{\xi}}_A\\
& ={\what{v}}_i \del_{\alpha} \Bigl( G_{,A}({\what{\xi}}) {\Phi}^A_{,i\alpha}({\what{F}}) \Bigr) + G_{,A}({\what{\xi}}) \del_{\alpha} \Bigl( {\Phi}_{,i\alpha}({\what{F}})
{\what{v}}_i\Bigr)\\
& =\del_{\alpha} \Bigl( {\what{v}}_i G_{,A}({\what{\xi}}) {\Phi}^A_{,i\alpha}({\what{F}}) \Bigr) + G_{,A}({\what{\xi}}) \Bigl(\del_{\alpha} \bigl(
{\Phi}_{,i\alpha}({\what{F}}) {\what{v}}_i\bigr) - {\Phi}_{,i\alpha}({\what{F}}) \SP \del_{\alpha} {\what{v}}_i\Bigr).
\end{aligned}
\end{equation*}
Since ${\what{F}}(\cdot,0)$ is a gradient, \eqref{EXTSYS}$_2$ ensures that it stays gradient. Hence, in view of the
null-Lagrangian property \eqref{NLPROPEXT}$_1$, we have
\begin{equation*}
\del_{\alpha} \bigl( {\Phi}_{,i\alpha}({\what{F}}) \SP {\what{v}}_i\bigr) = {\Phi}_{,i\alpha}({\what{F}}) \SP \del_{\alpha} {\what{v}}_i
\end{equation*}
and therefore
\begin{equation}\label{DETASMOOTH}
\begin{aligned}
\del_t \SP \eta({\what{v}},{\what{\xi}})  = \del_{\alpha} \Bigl( {\what{v}}_i G_{,A}({\what{\xi}}) {\Phi}^A_{,i\alpha}({\what{F}}) \Bigr).
\end{aligned}
\end{equation}

\par\smallskip

Next, using again \eqref{EXTSYS}, we get
\begin{equation*}
\begin{aligned}
& \del_t \Bigl( \eta({v},{\xi}) + \D\eta({v},{\xi})({\what{v}}-{v},{\what{\xi}}-{\xi}) \Bigr) \\[1pt]
& \quad =  \del_t \Bigl( \tfrac{1}{2}|{v}|^2+ G({\xi}) + {v}_i ({\what{v}}_i - {v}_i) +
G_{,A}({\xi})({\what{\xi}}-{\xi})_A
\Bigr)\\[2pt]
& \quad = \del_t {v}_i ({\what{v}}_i - {v}_i) + G_{,AB}({\xi}) \SP \del_t {\xi}_B \SP ({\what{\xi}}-{\xi})_A +
{v}_i
\del_t {\what{v}}_i + G_{,A}({\xi}) \SP \del_t {\what{\xi}}_A\\[2pt]
&\quad = \del_{\alpha} \Bigl( G_{,A}({\xi}) {\Phi}^A_{,i\alpha}({F}) \Bigr) ({\what{v}}_i - {v}_i) \\[2pt]
&\qquad + G_{,AB}({\xi}) \SP \del_{\alpha} \Bigl( {\Phi}^B_{,i\alpha}({F}){v}_i \Bigr)  ({\what{\xi}}-{\xi})_A \\[2pt]
&\qquad + {v}_i \SP \del_{\alpha} \Bigl( G_{,A}({\what{\xi}}) {\Phi}^A_{,i\alpha}({\what{F}}) \Bigr) \\[2pt]
&\qquad + G_{,A}({\xi}) \SP \del_{\alpha}\Bigl( {\Phi}^A_{,i\alpha}({\what{F}}) {\what{v}}_i \Bigr).\\
\end{aligned}
\end{equation*}
We now modify the above identity by transferring the spatial $\del_{\alpha}$ derivatives onto (what usually is a smooth) solution
$({v},{\xi})$. This approach and the null-Lagrangian property \eqref{NLPROPEXT}$_1$ lead us to
\begin{equation}\label{DRENTTMP1}
\begin{aligned}
& \del_t \Bigl( \eta({v},{\xi}) + \D\eta({v},{\xi})({\what{v}}-{v},{\what{\xi}}-{\xi}) \Bigr) \\[1pt]
&\quad = \del_{\alpha} \Bigl( G_{,A}({\xi}) \Bigr) {\Phi}^A_{,i\alpha}({F}) ({\what{v}}_i - {v}_i) \\[2pt]
&\qquad + G_{,AB}({\xi}) \SP {\Phi}^B_{,i\alpha}({F}) \SP \del_{\alpha} {v}_i \SP ({\what{\xi}}-{\xi})_A  \SP \\[2pt]
&\qquad + \del_{\alpha} \Bigl( {v}_i \SP G_{,A}({\what{\xi}}) {\Phi}^A_{,i\alpha}({\what{F}}) \Bigr) - \del_{\alpha}{v}_i \SP \Bigl( G_{,A}({\what{\xi}}) {\Phi}^A_{,i\alpha}({\what{F}}) \Bigr) \\[2pt]
&\qquad + \del_{\alpha}\Bigl( {\what{v}}_i G_{,A}({\xi}) {\Phi}^A_{,i\alpha}({\what{F}})  \Bigr) - \del_{\alpha} \Bigl( G_{,A}({\xi}) \Bigr) \SP {\Phi}^A_{,i\alpha}({\what{F}}) \SP {\what{v}}_i .\\
\end{aligned}
\end{equation}
Since $G_{,AB}=G_{,BA}$ the second term on the right hand side of \eqref{DRENTTMP1} satisfies
\begin{equation*}
\begin{aligned}
G_{,AB}({\xi}) \SP {\Phi}^B_{,i\alpha}({F}) \del_{\alpha} \SP {v}_i \SP ({\what{\xi}}-{\xi})_A  = \del_{\alpha} \SP
{v}_i \Bigl( G_{,AB}({\xi}) ({\what{\xi}}-{\xi})_B \SP \Bigr) {\Phi}^A_{,i\alpha}({F})
\end{aligned}
\end{equation*}
and thus rearranging the terms of \eqref{DRENTTMP1} we obtain
\begin{equation}\label{DRENTTMP2}
\begin{aligned}
& \del_t \Bigl( \eta({v},{\xi}) + \D\eta({v},{\xi})({\what{v}}-{v},{\what{\xi}}-{\xi}) \Bigr) \\[1pt]
%&\quad = \del_{\alpha} \Bigl( G_{,A}({\xi}) \Bigr) \Bigl( {\Phi}^A_{,i\alpha}({F}) (v_i - {v}_i) - {\Phi}^A_{,i\alpha}({\what{F}}) v_i \Bigr) \\[2pt]
%&\qquad  - \del_{\alpha} \SP {v}_i \Bigl( G_{,A}({\what{\xi}})  - G_{,AB}({\xi}) ({\what{\xi}}-{\xi})_B
%\SP \Bigr) {\Phi}^A_{,i\alpha}({F})\\[2pt]
%&\qquad  + \del_{\alpha} \SP {v}_i \Bigl( G_{,A}({\what{\xi}}) \bigl[ {\Phi}^A_{,i\alpha}({F}) -  {\Phi}^A_{,i\alpha}({\what{F}})\bigr]    \Bigr)\\[2pt]
%&\qquad  + \del_{\alpha} \Bigl( {v}_i \SP G_{,A}({\what{\xi}}) {\Phi}^A_{,i\alpha}({\what{F}}) \Bigr)  + \del_{\alpha} \Bigl(  v_i G_{,A}({\xi}) {\Phi}^A_{,i\alpha}({\what{F}})
%\Bigr)\\[2pt]
&\quad = \del_{\alpha} \Bigl( G_{,A}({\xi}) \Bigr) \Bigl( {\Phi}^A_{,i\alpha}({F}) - {\Phi}^A_{,i\alpha}({\what{F}})  \Bigr) ({\what{v}}_i-{v}_i)\\[2pt]
 &\qquad  - \del_{\alpha}  {v}_i \Bigl( G_{,A}({\what{\xi}})  - G_{,A}({\xi}) -G_{,AB}({\xi}) ({\what{\xi}}-{\xi})_B\SP \Bigr) {\Phi}^A_{,i\alpha}({F})\\[2pt]
&\qquad  + \del_{\alpha}  {v}_i \Bigl( G_{,A}({\what{\xi}}) - G_{,A}({\xi}) \Bigr) \Bigl({\Phi}^A_{,i\alpha}({F})-{\Phi}^A_{,i\alpha}({\what{F}})\Bigr)\\[2pt]
 &\qquad   - \del_{\alpha} \Bigl( G_{,A}({\xi}) \Bigr) {\Phi}^A_{,i\alpha}({\what{F}}) {v}_i  - G_{,A}({\xi}) {\Phi}^A_{,i\alpha}({\what{F}}) \SP \del_{\alpha} {v}_i
 \\[2pt]
&\qquad  + \del_{\alpha} \Bigl( {v}_i \SP G_{,A}({\what{\xi}}) {\Phi}^A_{,i\alpha}({\what{F}})\Bigr)  + \del_{\alpha} \Bigl({\what{v}}_i G_{,A}({\xi}) {\Phi}^A_{,i\alpha}({\what{F}}) \Bigr).\\
\end{aligned}
\end{equation}
Recalling the definition of the term $Q$ we see that \eqref{DRENTTMP2} may be written as
\begin{equation}\label{DRENTTMP3}
\begin{aligned}
\del_t \Bigl( & \eta({v},{\xi})  - \D\eta({v},{\xi})({\what{v}}-{v},{\what{\xi}}-{\xi}) \Bigr) =\\
&\qquad -Q - \del_{\alpha} \Bigl( {v}_i  G_{,A}({\xi}) {\Phi}^A_{,i\alpha}({\what{F}}) \Bigr) \\
&\qquad  +  \del_{\alpha} \Bigl( {v}_i \SP G_{,A}({\what{\xi}}) {\Phi}^A_{,i\alpha}({\what{F}})\Bigr) \\
&\qquad  +  \del_{\alpha} \Bigr( {\what{v}}_i G_{,A}({\xi}) {\Phi}^A_{,i\alpha}({\what{F}})\Bigr) .\\
\end{aligned}
\end{equation}

\par\smallskip

Now, we combine \eqref{DETASMOOTH} with \eqref{DRENTTMP3} to get
\begin{equation}\label{RENTIDSMOOTHTMP}
\begin{aligned}
\del_t & \Bigl( \eta({\what{v}},{\what{\xi}}) - \eta({v},{\xi}) - \D\eta({v},{\xi})({\what{v}}-{v},{\what{\xi}}-{\xi}) \Bigr) \\
&-\del_{\alpha} \Bigl( {\what{v}}_i G_{,A}({\what{\xi}}) {\Phi}^A_{,i\alpha}({\what{F}}) \Bigr)- \del_{\alpha} \Bigl( {v}_i  G_{,A}({\xi}) {\Phi}^A_{,i\alpha}({\what{F}}) \Bigr) \\
&+  \del_{\alpha} \Bigl( {v}_i \SP G_{,A}({\what{\xi}}) {\Phi}^A_{,i\alpha}({\what{F}})\Bigr)+  \del_{\alpha} \Bigr( {\what{v}}_i G_{,A}({\xi}) {\Phi}^A_{,i\alpha}({\what{F}})\Bigr)=Q.\\
\end{aligned}
\end{equation}
Recalling \eqref{RFLUXINTRO} we obtain
\begin{equation}\label{RFLUXSMOOTHTMP1}
\begin{aligned}
&-\Bigl( {\what{v}}_i G_{,A}({\what{\xi}}) {\Phi}^A_{,i\alpha}({\what{F}}) \Bigr)- \del_{\alpha} \Bigl( {v}_i  G_{,A}({\xi}) {\Phi}^A_{,i\alpha}({\what{F}}) \Bigr) \\
&+\Bigl( {v}_i \SP G_{,A}({\what{\xi}}) {\Phi}^A_{,i\alpha}({\what{F}})\Bigr)+  \del_{\alpha} \Bigr( {\what{v}}_i G_{,A}({\xi})
{\Phi}^A_{,i\alpha}({\what{F}})\Bigr)=\\
&\qquad \Bigl(G({\what{\xi}})-G({\xi})\Bigr)({v}_i - {\what{v}}_i){\Phi}^A_{,i\alpha}({\what{F}})=-q^r_{\alpha}.   \\
\end{aligned}
\end{equation}
Then, \eqref{RENTIDSMOOTHTMP} and \eqref{RFLUXSMOOTHTMP1} imply the desired identity \eqref{RENTIDSMOOTH}.
\end{proof}

\par\smallskip

 The identity \eqref{RENTDEFINTRO} can be used to estimate the evolution of the
difference between the two solutions. In particular, one can show that the solution $({\what{v}},{\what{\xi}})$ stays close to
$({v},{\xi})$ as long as the initial data do. For this to be realized one would need the "quadratic" term $Q$ to have the following
property:
\begin{itemize}
\item[(GC)] If $M>0$ is the constant such that
\begin{equation}\label{MBOUND}
\sup_{(x,t)\in \torus^3 \times[0,T]} \biggl(\sum_{\alpha,i } |\del_{\alpha} {v}_i| +
\sum_{\alpha,A}\bigl|\del_{\alpha} (G_{,A}({\xi}))\bigr| \biggr) \leqslant M,
\end{equation}
then there holds
\begin{equation}\label{CONDNESGR}
\bigl|Q\bigl({\what{v}},{\what{\xi}}; \SP {v},{\xi} \bigr)\bigr| \leqslant C \SP \eta^r ({\what{v}},{\what{\xi}}; \SP {v}, {\xi})
\end{equation}
for some constant $C=C(M)>0$ independent of $({\what{v}},{\what{\xi}})$.
\end{itemize}
Indeed, if (GC) is satisfied then one can conclude via the Gronwall lemma that for each smooth solution
$({\what{v}},{\what{\xi}})$ to \eqref{EXTSYS} and fixed smooth solution $({v},{\xi})$ satisfying \eqref{MBOUND} there holds
\begin{equation}\label{ERRORESTGR}
\INTT \,   \Bigl[\eta^r \bigl({\what{v}},{\what{\xi}};\SP {v}, {\xi}\bigr)\Bigr](x,t)  \, dx  \SP \leqslant  e^{C(M)t}\SP \INTT
\, \Bigl[\eta^r \bigl({\what{v}},{\what{\xi}};\SP {v}, {\xi}\bigr) \Bigr](x,0) \, dx
\end{equation}
which yields the desired estimate (and guarantees uniqueness of the solution).

\par

Observe that, the inequality \eqref{CONDNESGR} does not hold in general and also $Q$ is not necessarily quadratic. One must
impose certain requirements on the stored energy ${W}=G\circ {\Phi}$ or more precisely on the function $G({\what{\xi}})$ to
satisfy \eqref{CONDNESGR}. On the first glance it seems that it is sufficient to require hypotheses (H2)-(H4) which handle various
growth condition and integrability on $\torus^3$. However, splitting the term $Q$ into two parts as
\begin{equation}
\begin{aligned}
&\!\!Q({\what{v}},{\what{\xi}}; \SP {v},{\xi} ) =\biggl[\partial_{\alpha}{v}_i \Bigl(G_{,A}({\what{\xi}})-G_{,A}({\xi})\Bigr)\Bigl({\Phi}_{,i\alpha}^A({\what{F}})-{\Phi}_{,i\alpha}^A({F})\Bigr)\biggr]\\
& \!\! + \biggl[ \partial_{\alpha}\Bigl(G_{,A}({\xi})\Bigr) \Bigl({\Phi}_{,i\alpha}^A({\what{F}})-{\Phi}_{,i\alpha}^A({F})\Bigr) \Bigl({\what{v}}_i-{v}_i\Bigr)\\
&\!\!\quad  + \partial_{\alpha}{v}_i \Bigl(G_{,A}({\what{\xi}})-G_{,A}({\xi})-G_{,AB}({\xi})({\what{\xi}}-{\xi})_B
\Bigr){\Phi}_{,i\alpha}^A({F}) \biggr] =: Q_1 + Q_2,
\end{aligned}
\end{equation}
we find that $Q_1$ (the last $10$ terms in its sum) fails to comply with (GC) regardless of (H2)-(H4).

\par\smallskip

To satisfy $(GC)$ there are two options to consider: \vspace{-3pt}
\begin{itemize}
\item[(O1)]  One can assume (H1) which implies (GC). The hypothesis (H1) is used in \cite{MT12} to handle the convergence in the
    semi-discrete case. The advantage of (H1) is that it allows to work with a very concrete class of functions. The disadvantage is
    that it restricts the class of stored energies even though allows for $L^p$ growth in ${\what{F}}$ component. The reviewer of
    \cite{MT12} noticed to me that perhaps it is best to work with more general class of stored energies.

\item[(O2)] One can impose the following requirement: if $({v},{\xi})$ satisfies \eqref{MBOUND} then
\begin{equation*}
Q_1({\what{v}},{\what{\xi}}; \SP {v}, {\xi}) \leqslant C_1 \SP \eta^r({\what{v}},{\what{\xi}},{v},{\xi})
\end{equation*}
for some $C_1=C_1(M)$ independent of $({\what{v}},{\what{\xi}})$. This requirement together with (H2)-(H5) implies $(GC)$
and allows for more general class of stored energies. The disadvantage, however, is that this requirement hides information about
the true nature of the stored energy.
\end{itemize}

\subsection{Fully-discrete scheme and approximates}

To mimic the approach what has been used in the case of semi-discrete settings \cite{MT12} we rewrite the scheme
\eqref{FDISCEXTSYSNONDIV_SHORT} componentwise using index notations: Given the $(n-1)$-st time iterate
\begin{equation*}
(v^{n-1}_h,\xi^{n-1}_h) = (v^{n-1}_h,
F^{n-1}_h,Z^{n-1}_h,w^{n-1}_h) \in U_h \times H^{F}_h \times
H^{Z}_h \times H^{w}_h
\end{equation*}
find the $n$-th iterate
\begin{equation*}
(v^{n}_h,\xi^{n}_h) = (v^{n}_h, F^{n}_h,Z^{n}_h,w^{n}_h) \in U_h
\times H^{F}_h \times H^{Z}_h \times H^{w}_h
\end{equation*}
by solving for $i=1,\dots,3$, $A=1,\dots,19$
\begin{equation}\label{FDSCHEMECOMP1}
\begin{aligned}
\INTT \SP \biggl(\frac{v^{n}_{i,\SP h} - v^{n-1}_{i,\SP
h}}{\tau}\biggr) \varphi^i_{h} \SP dx & = - \INTT \Bigl(
G_{,A}(\xi^n_h) \SP {\Phi}^A_{,i \alpha}(F^{n-1}_h) \SP \Bigl)
\del_{\alpha} \varphi^i_{h} \, dx
\\[3pt]
\INTT \biggl( \frac{\xi^n_{A,h} - \xi^{n-1}_{A,h}}{\tau}\biggr)
\psi_h \SP dx &= \INTT \SP  \Bigl( {\Phi}^A_{,i \alpha}(F^{n-1}_h)
\SP \del_{\alpha} v_{i,h}^n \Bigr) \psi^A_h \, dx
\end{aligned}
\end{equation}
for all $\varphi_h=(\varphi^i_h)_{i=1}^3 \in U_h$, $\psi_h=(\psi^A_{h})_{A=1}^{19} \in H_h := H^F_h \times H^Z_h \times
H^w_h$.

\par\smallskip

The choice of finite element spaces has a great impact on the fully discrete scheme. Namely, the space $H_h$ of test functions
turned out to be so rich ({\it c.f.} \eqref{CONSTRPROP3}) that the last equation in \eqref{FDSCHEMECOMP1} holds exactly, that is,
\eqref{FDSCHEMECOMP1} is equivalent to
\begin{equation}\label{FDSCHEMECOMP2}
\begin{aligned}
\INTT \SP \frac{v^{n}_{i,\SP h} - v^{n-1}_{i,\SP h}}{\tau} \SP
\varphi^i_{h} \SP dx & = - \INTT \Bigl(G_{,A}(\xi^n_h) \SP
{\Phi}^A_{,i \alpha}(F^{n-1}) \Bigl) \del_{\alpha} \varphi^i_{h} \SP
dx
\\[3pt]
\frac{\xi^n_{A,h} - \xi^{n-1}_{A,h}}{\tau}   \SP &= \SP {\Phi}^A_{,i
\alpha}(F^{n-1}_h) \SP \del_{\alpha} v_{i,h}^n
\end{aligned}
\end{equation}
for all $\varphi_h=(\varphi^i_h)_{i=1}^3 \in U_h$.

\par\smallskip

\begin{remark}
The scheme in the form \eqref{FDSCHEMECOMP2} provides a great opportunity to us since we are able to exploit the
null-Lagrangian properties. Namely, \eqref{FDSCHEMECOMP2}$_2$ guarantees that if $F^0_{h}$ is a gradient then $F^n_{h}$,
$n\geqslant 1$ are all gradients as well and hence the null-Lagrangian properties could be exploited regardless of $F^n$ being
discontinuous; see Lemma \ref{DIVNLLMM}. For example, when  ${y}(x,t)$ is a smooth map that induces initial data
\begin{equation*}
{v}_0(x)=\del_t \SP {y}(x,0), \quad {F}_0=\nabla
{y} (x,0).
\end{equation*}
we set
\begin{equation}\label{IDATAPROJ}
F^0_h:=\nabla (\PP^U {y}(x,0)) \in H^F
\end{equation}
where $\PP^U$ is a standard $L^2$-projector on $U_h$ defined in \eqref{ORTHPROJ}$_1$.
\end{remark}

\par\smallskip

\noindent{\bf Approximates.} Given the sequence of spatial iterates $(v^n_h,\xi^n_h)$, $n\geqslant 1$ we define  (following
\cite{MT12}) the time-continuous, piecewise linear interpolates
\begin{equation}
\what{\Theta}^{\SP(\tau,h)}:=\bigl(\what{v}^{\SP (\tau, h)},
\what{\xi}^{\SP(\tau, h)}\bigr)
\end{equation}
 with
\begin{equation}\label{CONTINTP}
\begin{aligned}
\what{v}^{\SP (\tau, h)}(t)   &= \sum^{\infty}_{n=1}\Chi^n(t) \Bigl(v^{n-1}_h+\frac{t-\tau(n-1)}{\tau}(v^n_h-v^{n-1}_h)\Bigr)\\
\what{\xi}^{\SP (\tau, h)}(t) &= \Bigl(\what{F}^{\SP(\tau,h)},\what{Z}^{\SP(\tau,h)},\what{w}^{\SP(\tau,h)}\Bigr)(t)\\
&=\sum^{\infty}_{n=1}\Chi^n(t)\Bigl( \xi^{n-1}_h+\frac{t-\tau(n-1)}{\tau}(\xi^n_h - \xi^{n-1}_h)\Bigr),\\
\end{aligned}
\end{equation}
and the piecewise constant interpolates
\[
\bar{\Theta}^{(h)}:=(\bar{v}^{\SP(\tau,h)}\,, \bar{\xi}^{\SP(\tau, h)}) \quad \text{and} \quad {\bar{F}_*}^{\SP (\tau,h)}
\]
 by
\begin{equation}\label{CONSTINTP}
\begin{aligned}
\bar{v}^{\SP(\tau,h)}(t)&=\sum^{\infty}_{n=1}\Chi^n(t)v^n_h\\
\bar{\xi}^{\SP(\tau,h)}(t)&=(\bar{F}^{\SP (\tau,h)},\bar{Z}^{(\SP \tau, h)},\bar{w}^{\SP(\tau, h)})(t)=\sum^{\infty}_{n=1}\Chi^n(t)\xi^n_h\\
{\bar{F}_*}^{\SP(\tau,h)}(t)&=\sum^{\infty}_{n=1}\Chi^n(t)F^{n-1}_h,
\end{aligned}
\end{equation}
where $\Chi^n(t)$ is the characteristic function of the interval $I_n:=[(n-1)\tau,n\tau)$. Notice that ${\bar{F}_*}^{\SP(\tau,h)}$ is
the time-shifted version of $F^{\SP(\tau,h)}$  and used later in various calculations.

\par\medskip

\noindent{\bf The scheme via approximates.} Clearly the linear approximates \eqref{CONTINTP} are absolutely continuous in time.
This motivates to rewrite the discrete system \eqref{FDSCHEMECOMP2} in terms of the approximates \eqref{CONTINTP},
\eqref{CONSTINTP}. Then the scheme \eqref{FDSCHEMECOMP2} transforms into:
\begin{equation}\label{FDSCHEMECOMPAPP}
\begin{aligned}
\INTT \SP \Bigl(\del_t \what{v}^{\SP(\tau,h)}_i\Bigr) \SP \varphi^i_{h} \SP dx &
= - \INTT \Bigl(G_{,A}\bigl(\bar{\xi}^{\SP(\tau,h)}\bigr) \SP {\Phi}^A_{,i \alpha}\bigl({\bar{F}_*}^{\SP(\tau,h)}\bigr)\Bigr) \SP \del_{\alpha} \varphi^i_{h} \, dx \\[3pt]
\del_t \what{\xi}^{\SP(\tau,h)} &= \SP {\Phi}^A_{,i \alpha}\bigl({\bar{F}_*}^{\SP(\tau,h)}\bigr) \SP \del_{\alpha}
\bar{v}^{\SP(\tau,h)}_i
\end{aligned}
\end{equation}
for a.e. $t>0$ and $\forall\varphi_h=(\varphi^i_h)_{i=1}^3 \in U_h$.

\subsection{Stability of the fully-discrete variational scheme}

For the rest of the sequel, we {\it suppress the dependence on $\tau,h$} to simplify notations and assume that:
\begin{itemize}
\item[{{(A1)}}] $\what{\Theta}=({\what{v}},{\what{\xi}})$ are the time-continuous approximates; see \eqref{CONTINTP}.

\item[{{(A2)}}] $\bar{\Theta}=(\bar{v},\bar{\xi})$ and ${\bar{F}_*}$ are the constant approximates; see \eqref{CONSTINTP}.

\item[{(A3)}] ${\Theta}=({v},{\xi})=({v},{F},Z,w)$ is a smooth solution to \eqref{EXTSYS} on $\mathbb{T}^3 \times [0,T]$.

\item[{(A4)}]  $F^0$, ${F}(\cdot,0)$ are gradients and initial iterate $(v^0,\xi^0)\in U_h \times H_h$.

\end{itemize}

\par\smallskip

The goal of this section is to derive an identity for a relative energy among the two solutions. To this end, we define the relative
entropy
\begin{equation}\label{RENTDEF}
\eta^r(\what{\Theta},{\Theta}):=\eta(\what{\Theta})-\eta({\Theta})-\D\eta({\Theta})(\what{\Theta}-{\Theta})
\end{equation}
where $\eta$ is the convex entropy of the extended elasticity system \eqref{EXTSYS} defined by
\begin{equation}\label{ENTDEF2}
   \eta(\Theta)=\frac{1}{2} |v|^2 + G(\xi),\quad \Theta=(v,\xi).
\end{equation}

\par\smallskip

To deriving the relative entropy identity, we will employ the lemma \ref{DIVNLLMM} that extends the null-Lagrangian properties to
non-smooth gradients. These properties will be used extensively in our computations throughout the paper.
%\begin{lemma}[\textbf{null-Lagrangian properties}]\label{DIVNLLMM}
%Assume $q>2$ and $r \geqslant \tfrac{q}{q-2}$. Then, if $u \in
%W^{1,q}(\mathbb{T}^3;\RR^3)$, $z\in W^{1,r}(\mathbb{T}^3)$, we
%have
%\begin{equation}\label{NLPROPEXT}
%\begin{aligned}
%\partial_{\alpha}\biggl(\pd{{\Phi}^A}{F_{i\alpha}}\BBR{\nabla{u}}\biggr)&=0\\
%\partial_{\alpha}\biggl(\pd{{\Phi}^A}{F_{i\alpha}}(\nabla{u})z\biggr) &= \pd{{\Phi}^A}{F_{i\alpha}}(\nabla{u})\,\partial_{\alpha}z
%\end{aligned} \quad \mbox{in} \quad \cD'(\mathbb{T}^3)
%\end{equation}
%for each $i=1,\dots,3$ and $A=1,\dots,19$.
%\end{lemma}

\vspace{3pt}

\begin{lemma}[\textbf{relative entropy identity}]\label{RENTIDLMM} Let (A1)-(A4) hold. Then
\begin{equation}\label{RENTID}
\INTT \SP \partial_t \SP \eta^r(x,t) \, dx = \INTT \biggl( -\frac{1}{\tau} D + Q+E+\bar{E} \biggr) \SP dx, \quad
\mbox{a.e.} \;\; t \in[0,T]
\end{equation}
where
\begin{equation}\label{QTERM}
\begin{aligned}
Q &:=     \partial_{\alpha}(G_{,A}({\xi})) \bigl({\Phi}_{,i\alpha}^A({\what{F}})-{\Phi}_{,i\alpha}^A({F})\bigr)
\bigl({\what{v}}_i-{v}_i\bigr)\\
&\qquad  + \partial_{\alpha}{v}_i
\bigl(G_{,A}({\what{\xi}})-G_{,A}({\xi})\bigr)\bigl({\Phi}_{,i\alpha}^A({\what{F}})-{\Phi}_{,i\alpha}^A({F})\bigr)\\
&\qquad  + \partial_{\alpha}{v}_i \Bigl(G_{,A}({\what{\xi}})-G_{,A}({\xi})-G_{,AB}({\xi})({\what{\xi}}-{\xi})_B
\Bigr){\Phi}_{,i\alpha}^A({F})\\
\end{aligned}
\end{equation}
estimates the difference between the two solutions,
\begin{equation}\label{DTERM}
D:=\sum_{n=1}^{\infty}\chi^n(t)D^n \quad \mbox{with} \quad
D^n:=\bigl(\nabla{\eta}({\bar{\Theta}})-\nabla{\eta}(\what{\Theta})\bigr) \delta\Theta^n,
\end{equation}
where
\begin{equation}\label{DITER}
\begin{aligned}
\delta \Theta^n &=(\delta v^n, \delta F^n, \delta Z^n, \delta w^n):=\Theta^n - \Theta^{n-1}\\
 &\quad =(v^n-v^{n-1}, F^n-F^{n-1}, Z^n-Z^{n-1}, w^n-w^{n-1}),\\
\end{aligned}
\end{equation}
is the dissipative term,
\begin{equation}\label{ETERM}
\begin{aligned}
E &:= \partial_{\alpha}(G_{,A}({\xi}))\Bigl[ \,{\Phi}_{,i\alpha}^A({F})               \bigl({\bar{v}}_i-{\what{v}}_i\bigr)\\
&\!\quad\qquad\qquad\qquad+  \bigl({\Phi}_{,i\alpha}^A({\what{F}})-{\Phi}_{,i\alpha}^A({F})\bigr)    \bigl({\bar{v}}_i-{\what{v}}_i\bigr)\\
&\!\quad\qquad\qquad\qquad+ \bigl({\Phi}_{,i\alpha}^A({\bar{F}_*})-{\Phi}_{,i\alpha}^A({\what{F}})\bigr)   \bigl({\bar{v}}_i-{\what{v}}_i\bigr)\\
&\!\quad\qquad\qquad\qquad+ \bigl({\Phi}_{,i\alpha}^A({\bar{F}_*})-{\Phi}_{,i\alpha}^A({\what{F}})\bigr)   \bigl({\what{v}}_i-{v}_i\bigr) \Bigr]\\
&\qquad+\partial_{\alpha}{v}_i\Bigl[ \bigl(G_{,A}({\bar{\xi}})-G_{,A}({\what{\xi}}) \bigr){\Phi}_{,i\alpha}^A({F})\\
&\!\!\!\qquad\qquad\qquad + \bigl(G_{,A}({\bar{\xi}})-G_{,A}({\what{\xi}})\bigr)        \bigl({\Phi}_{,i\alpha}^A({\bar{F}_*})-{\Phi}_{,i\alpha}^A({\what{F}})\bigr)\\
&\!\!\!\qquad\qquad\qquad + \bigl(G_{,A}({\bar{\xi}})-G_{,A}({\what{\xi}})\bigr)        \bigl({\Phi}_{,i\alpha}^A({\what{F}})-{\Phi}_{,i\alpha}^A({F})\bigr)  \\
&\!\!\!\qquad\qquad\qquad + \bigl(G_{,A}({\what{\xi}})-G_{,A}({\xi})\bigr)
\bigl({\Phi}_{,i\alpha}^A({\bar{F}_*})-{\Phi}_{,i\alpha}^A({\what{F}})\bigr)\Bigr]
\end{aligned}
\end{equation}
is the error term that appears due to the discretization in time, and
\begin{equation}\label{EBARTERM}
\bar{E}: \SP = \SP G_{,A}({\bar{\xi}}) \SP {\Phi}^A_{i\alpha}({\bar{F}_*}) \SP \del_{\alpha} \bigl((\PP^U {v})_i-{v}_i
\bigr)
\end{equation}
is the error term that appears due to spatial discretization.
\end{lemma}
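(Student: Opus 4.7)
The plan is to mirror the derivation of the smooth relative entropy identity (Lemma \ref{RENTIDSMOOTHLMM}), working now with the time-continuous approximates $\what\Theta$, which satisfy the fully-discrete scheme \eqref{FDSCHEMECOMPAPP} rather than the extended system \eqref{EXTSYS}. I split
\begin{equation*}
\INTT \partial_t \SP \eta^r \, dx \,=\, \INTT \partial_t \SP \eta(\what\Theta) \, dx \,-\, \INTT \partial_t \bigl[ \eta(\Theta) + D\eta(\Theta)(\what\Theta - \Theta) \bigr] dx
\end{equation*}
and process the two integrals separately, carefully tracking the discrepancies that arise because the scheme evaluates $G_{,A}$ and $\Phi^A_{,i\alpha}$ at the constant interpolates $\bar\xi, \bar F_*$ and uses $\bar v$ on the right-hand side, whereas the smooth proof used $\what\xi, \what F, \what v$.

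For the first integral, which will yield the dissipation, the key observation is that $\bar v \in U_h$. Testing the velocity equation of \eqref{FDSCHEMECOMPAPP} with $\varphi_h = \bar v$ and multiplying the pointwise $\xi$-equation by $G_{,A}(\bar\xi)$ before integrating produces the same right-hand side in both, so their sum gives $\int \nabla\eta(\bar\Theta) \cdot \partial_t \what\Theta \, dx = 0$. Hence
\begin{equation*}
\INTT \partial_t \SP \eta(\what\Theta) \, dx \,=\, \INTT \bigl[ \nabla\eta(\what\Theta) - \nabla\eta(\bar\Theta) \bigr] \cdot \partial_t \what\Theta \, dx \,=\, -\tfrac{1}{\tau}\INTT D \, dx,
\end{equation*}
since $\partial_t \what\Theta = \delta\Theta^n/\tau$ on each $I_n$; this immediately supplies the $-D/\tau$ term in \eqref{RENTID}.

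For the second integral I expand the time derivative and substitute the smooth equations \eqref{EXTSYS} wherever $\partial_t v$ or $\partial_t \xi$ appears; this reproduces the computation of Lemma \ref{RENTIDSMOOTHLMM} modulo the two terms $\int v_i \partial_t \what v_i \, dx$ and $\int G_{,A}(\xi) \partial_t \what\xi_A \, dx$, which must be routed through the scheme. For the former I use that $\partial_t \what v$ is an affine combination of iterates in $U_h$ and so lies in $U_h$; by orthogonality $v - \PP^U v \perp U_h$ I may replace $v$ by $\PP^U v$, apply the velocity equation of \eqref{FDSCHEMECOMPAPP} with $\varphi_h = \PP^U v$, then split $\PP^U v = v + (\PP^U v - v)$ to produce a ``smooth-like'' contribution plus exactly the projection error $\bar E$. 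For the latter, the pointwise $\xi$-equation yields $G_{,A}(\xi) \partial_t \what\xi_A = G_{,A}(\xi) \Phi^A_{,i\alpha}(\bar F_*) \partial_\alpha \bar v_i$ directly.

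What remains, and is the main technical obstacle, is an algebraic manipulation of the resulting expression to reduce it to $-\int (Q + E + \bar E) \, dx$. I accomplish this by adding and subtracting the intermediate quantities $G_{,A}(\what\xi)$, $\Phi^A_{,i\alpha}(\what F)$, and $\what v_i$, so that the pieces matching the smooth derivation combine to $Q$ exactly as in Lemma \ref{RENTIDSMOOTHLMM}, while the eight discretization residuals gather into $E$. At several points this requires integration by parts against the non-smooth null-Lagrangians $\Phi^A_{,i\alpha}(F),\Phi^A_{,i\alpha}(\what F),\Phi^A_{,i\alpha}(\bar F_*)$; this is precisely what Lemma \ref{DIVNLLMM} licenses, and its hypothesis is available because $F^n_h$ remains a gradient at every step by \eqref{CONSTRPROP4} and $F$ is a gradient by property (E1). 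After these manipulations the divergence contributions vanish on integration over $\torus^3$, leaving the stated identity \eqref{RENTID}.
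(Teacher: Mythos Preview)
Your proposal is correct and follows essentially the same route as the paper's proof: you split $\partial_t\eta^r$ into $\partial_t\eta(\what\Theta)$ and $\partial_t[\eta(\Theta)+D\eta(\Theta)(\what\Theta-\Theta)]$, extract the dissipation from the first by testing \eqref{FDSCHEMECOMPAPP} with $\bar v\in U_h$, route $\int v_i\,\partial_t\what v_i$ through $\PP^U v$ to produce $\bar E$, use the pointwise $\xi$-equation for $\int G_{,A}(\xi)\,\partial_t\what\xi_A$, and then add/subtract the intermediates $G_{,A}(\what\xi),\Phi^A_{,i\alpha}(\what F),\what v_i$ to separate $Q$ from $E$, invoking Lemma~\ref{DIVNLLMM} on the gradients $\bar F_*$ (via \eqref{CONSTRPROP4}) to kill the residual linear term. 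The paper organizes the last step as a decomposition $J=J_1+J_2+J_3+J_4$ with $\int J_4\,dx=0$ by the null-Lagrangian lemma, but this is exactly the bookkeeping you describe.
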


\begin{proof}

By (A1) we have that $F^0_h$, ${F}(\cdot,0)$ are gradients. Hence by \eqref{FDSCHEMECOMP2}, \eqref{CONTINTP},
\eqref{CONSTINTP}, and the property (E1) we conclude
\begin{equation}\label{FGRADPROP}
\mbox{${\what{F}}$, ${\bar{F}}$, ${\bar{F}_*}$ and ${F}$ are gradients
$\forall t \in [0,T]$}.
\end{equation}

\par\smallskip

Next, recalling \eqref{ENTDEF2} we compute %\begin{equation}\label{DETAAP}
%\begin{aligned}
%\partial_t \bigl(\eta(\what{\Theta})\bigr)
%&      =   {\what{v}}_i \hspace{1pt} \partial_t{{\what{v}}_i}+G_{,A}({\what{\xi}}) \hspace{1pt} \partial_t{\what{\xi}}_A\\
%&      =   ({\what{v}}_i-{\bar{v}}_i) \hspace{1pt}
%\partial_t {\what{v}}_i+(G_{,A}({\what{\xi}})-G_{,A}({\bar{\xi}})) \SP
%\partial_t {\what{\xi}}_A \\
%&\quad +    {\bar{v}}_i \SP \del_t {\what{v}}_i + G_{,A}({\bar{\xi}}) \SP \del_t {\what{\xi}}\\
%&= \frac{1}{\tau}\sum_{n=1}^{\infty} \Chi^n(t) \bigl(\nabla{\eta(\what{\Theta})}-\nabla{\eta(\Theta)}\bigr)
%\hspace{1pt} \delta\Theta^n \\
%&\quad + {\bar{v}}_i \SP \del_t {\what{v}}_i + G_{,A}({\bar{\xi}}) \SP \del_t {\what{\xi}}.
%\end{aligned}
%\end{equation}
\begin{equation}\label{DETAAP}
\begin{aligned}
\!\partial_t \bigl(\eta(\what{\Theta})\bigr)
&      =   {\what{v}}_i \hspace{1pt} \partial_t{{\what{v}}_i}+G_{,A}({\what{\xi}}) \hspace{1pt} \partial_t{\what{\xi}}_A\\
&= {\bar{v}}_i \SP \del_t {\what{v}}_i + G_{,A}({\bar{\xi}}) \SP \del_t {\what{\xi}} + \frac{1}{\tau}\sum_{n=1}^{\infty} \Chi^n(t) \bigl(\nabla{\eta(\what{\Theta})}-\nabla{\eta({\bar{\Theta}})}\bigr)\hspace{1pt} \delta\Theta^n. \\
\end{aligned}
\end{equation}
By construction ${\bar{v}}(\cdot,t) \in U_h$, $\forall t\in[0,t]$. Thus, setting $\varphi = {\bar{v}}(\cdot,t)$ in the weak formulation
\eqref{FDSCHEMECOMPAPP}$_1$ and using \eqref{FDSCHEMECOMPAPP}$_2$ we obtain
\begin{equation*}
\INTT \SP {\bar{v}}_i \SP \del_t {\what{v}}_i \, dx = - \INTT  G_{,A}({\bar{\xi}})
\SP {\Phi}^A_{,i \alpha}({\bar{F}_*}) \SP \del_{\alpha} {\bar{v}}_i \, dx = -
\INTT  G_{,A}({\bar{\xi}}) \SP \del_t {\what{\xi}} \, dx.
\end{equation*}
Then, integrating expression \eqref{DETAAP} we obtain
\begin{equation}\label{INTDETAAP}
\begin{aligned}
\INTT \SP \partial_t \bigl(\eta(\what{\Theta})\bigr) \,  dx &=  \INTT \SP \biggl(-\frac{1}{\tau}\sum_{n=1}^{\infty}
\Chi^n(t) \bigl(\nabla{\eta({\bar{\Theta}})}-\nabla{\eta(\what{\Theta})}\bigr) \SP \delta\Theta^n \biggr) dx.
\end{aligned}
\end{equation}

\par\smallskip

Next, we compute
\begin{equation}\label{TMPRENT1}
\begin{aligned}
&\partial_t \Bigl( \tfrac{1}{2}{v}^2 + G({\xi})
+{v}_i({\what{v}}_i-{v}_i) + G_{,A}({\xi})({\what{\xi}}-{\xi})_A\Bigr)  =\\
&\qquad \partial_t{v}_i \SP ({\what{v}}_i-{v}_i)
+  \partial_t \bigl(G_{,A}({\xi})\bigr)({\what{\xi}}-{\xi})_A
+ {v}_i \SP \partial_t {\what{v}}_i + G_{,A}({\xi}) \SP \partial_t {\what{\xi}}_A.\\
\end{aligned}
\end{equation}
Since $({v},{\xi})$ is a smooth solution to \eqref{EXTSYS} we have
\begin{equation*}\label{TMPRENT2}
\begin{aligned}
\del_t {v}_i \SP &= \SP \del_{\alpha} \bigl( G_{,A}({\xi}) \SP {\Phi}^A_{,i\alpha}({F}) \bigr)
                 \SP = \SP \del_{\alpha} \bigl( G_{,A}({\xi}) \bigr) \SP {\Phi}^A_{,i\alpha}({F})\\[2pt]
\del_t {\xi}_A \SP &= \SP  \del_{\alpha} \bigl( {\Phi}^A_{,i\alpha}({F})\SP{v}_i \bigr) \SP = \SP
{\Phi}^A_{,i\alpha}({F}) \SP \del_{\alpha} {v}_i
\end{aligned}
\end{equation*}
where we used \eqref{NLPROPEXT}$_1$ and the fact that ${F}$ is a gradient. Also, from \eqref{FDSCHEMECOMP2} and
\eqref{FDSCHEMECOMPAPP} it follows that $\del_t {\what{v}} \in U^h$. Hence, since $\PP^U$ is a standard orthogonal projector,
by \eqref{FDSCHEMECOMPAPP}$_{1}$ we have
\begin{equation*}\label{TMPRENT3}
    \INTT {v}_i \SP \del_t {\what{v}}_i \SP dx =     \INTT \bigl(\PP^U{v}\bigl)_i \DSP \del_t {\what{v}}_i \, dx
    =- \INTT \SP  G_{,A}({\bar{\xi}}) {\Phi}^A_{i\alpha}({\bar{F}_*}) \SP \del_{\alpha} (\PP^U {v})_i \, dx.
\end{equation*}
Finally, by \eqref{FDSCHEMECOMPAPP}$_2$
\begin{equation*}\label{TMPRENT4}
   \INTT \SP G_{,A}({\xi}) \SP \del_t {\what{\xi}}_A \, dx = \INTT
   G_{,A}({\xi})\SP
    {\Phi}_{,i\alpha}({\bar{F}_*}) \SP \del_{\alpha} {\bar{v}}_i \, dx.
\end{equation*}
Now, integrating \eqref{TMPRENT1} and using the last four identities we get
\begin{equation}\label{DETABARPLUSGRAD}
\begin{aligned}
& \INTT \SP  \del_t \bigl( \eta({\Theta}) + \nabla\eta({\Theta}) ( \what{\Theta}-{\Theta}) \bigr) \SP dx
 = \\
 & \qquad  \INTT \SP \SP \del_{\alpha} \bigl( G_{,A}({\xi}) \bigr) \SP {\Phi}^A_{,i\alpha}({F})\SP
 ({\what{v}}_i-{v}_i) \, dx  \\
 &\quad + \INTT \SP G_{,AB}({\xi}) \SP {\Phi}^B_{,i\alpha}({F}) \SP \del_{\alpha} {v}_i ({\what{\xi}}-{\xi})_A \, dx\\
 &\quad + \INTT  \Bigl( G_{,A}({\xi})\SP {\Phi}^A_{,i\alpha}({\bar{F}_*}) \SP \del_{\alpha} {\bar{v}}_i
        - G_{,A}({\bar{\xi}}) \SP {\Phi}^A_{i\alpha}({\bar{F}_*}) \SP \del_{\alpha} {v}_i \Bigr) \, dx\\
 &\quad + \INTT \SP  G_{,A}({\bar{\xi}}) \SP {\Phi}^A_{i\alpha}({\bar{F}_*}) \DSP \del_{\alpha} \bigl({v}_i - (\PP^U {v})_i \bigr) \,
 dx.
\end{aligned}
\end{equation}

\par\smallskip

Subtracting \eqref{DETABARPLUSGRAD} from \eqref{DETAAP}, recalling \eqref{RENTDEF}, \eqref{DTERM} and \eqref{EBARTERM},
and then using the fact that $G_{,AB}=G_{BA}$ we conclude that
\begin{equation}\label{RELENTID1}
\begin{aligned}
& \INTT \SP  \eta^r(x,t) \, dx = \INTT \SP \biggl( -\frac{1}{\tau} \sum_{n=1}^{\infty} \Chi^n(t)D_n + \bar{E} +
J\biggr) dx
\end{aligned}
\end{equation}
with
\begin{equation}\label{JDEF}
\begin{aligned}
  J&:=      \del_{\alpha} {v}_i \Bigl( G_{,A}({\what{\xi}})- G_{,A}({\xi})- G_{,AB}({\xi}) \SP  ({\what{\xi}}-{\xi})_B \Bigr) {\Phi}^A_{,i\alpha}({F})\\[1pt]
 &\quad +  \del_{\alpha} {v}_i \Bigl[ \bigl( G_{,A}({\bar{\xi}}) - G_{,A}({\xi})\bigr) \SP {\Phi}^A_{i\alpha}({\bar{F}_*}) - \bigl(G_{,A}({\what{\xi}}) - G_{,A}({\xi}) \bigr) \SP {\Phi}^A_{,i\alpha}({F})\Bigr] \\[1pt]
 &\quad +  \del_{\alpha} ( G_{,A}({\xi}) ) \Bigl( {\Phi}^A_{,i\alpha}({\bar{F}_*})\SP ({\bar{v}}_i-{v}_i) -{\Phi}^A_{,i\alpha}({F})\SP ({\what{v}}_i-{v}_i) \Bigr)\\[1pt]
 &\quad +  G_{,A}({\xi})\SP {\Phi}^A_{,i\alpha}({\bar{F}_*}) \SP \del_{\alpha} ({v}_i-{\bar{v}}_i) +  \del_{\alpha} ( G_{,A}({\xi})) \SP {\Phi}^A_{,i\alpha}({\bar{F}_*})\SP ({v}_i-{\bar{v}}_i)
 \\[3pt]
 &\,\quad := J_1+J_2+J_3+J_4.\\
\end{aligned}
\end{equation}

\par\smallskip

Consider the terms on the right-hand side of \eqref{JDEF}. First, we  rearrange the term $J_2$ as follows
\begin{equation}\label{J2}
\begin{aligned}
J_2&= \partial_{\alpha}{v}_i
\Bigl[ \bigl(G_{,A}({\bar{\xi}})- G_{,A}({\xi})\bigr) \hspace{1pt} {\Phi}^A_{,i\alpha}({\bar{F}_*}) - \bigl(G_{,A}({\what{\xi}})- G_{,A}({\xi})\bigr) \hspace{1pt}  {\Phi}^A_{,i\alpha}({F})\Bigr]\\[0pt]
&= \partial_{\alpha}{v}_i \Bigl[ \bigl(G_{,A}({\bar{\xi}})- G_{,A}({\what{\xi}})\bigr) \bigl( {\Phi}^A_{,i\alpha}({\bar{F}_*}) -
{\Phi}^A_{,i\alpha}({\what{F}}) \bigr ) \\
&\!\qquad\qquad + \bigl(G_{,A}({\bar{\xi}})- G_{,A}({\what{\xi}})\bigr) \bigl(
{\Phi}^A_{,i\alpha}({\what{F}}) - {\Phi}^A_{,i\alpha}({F}) \bigr )\\
&\!\qquad\qquad + \bigl(G_{,A}({\bar{\xi}})- G_{,A}({\what{\xi}})\bigr) \hspace{1pt}
{\Phi}^A_{,i\alpha}({F})\\
&\!\qquad\qquad + \bigl(G_{,A}({\what{\xi}})- G_{,A}({\xi})\bigr) \bigl(
{\Phi}^A_{,i\alpha}({\bar{F}_*}) - {\Phi}^A_{,i\alpha}({\what{F}}) \bigr ) \\
&\!\qquad\qquad + \bigl(G_{,A}({\what{\xi}})- G_{,A}({\xi})\bigr) \bigl( {\Phi}^A_{,i\alpha}({\what{F}}) -
{\Phi}^A_{,i\alpha}({F}) \bigr ) \Bigr].
\end{aligned}
\end{equation}
Then, we modify the term $J_3$ writing it in the following way:
\begin{equation}\label{J3}
\begin{aligned}
J_3 & = \partial_{\alpha}(G_{,A}({\xi})) \Bigl[ {\Phi}_{,i\alpha}^A({\bar{F}_*}) ({\bar{v}}_i-{v}_i) -{\Phi}_{,i\alpha}^A({F})({\what{v}}_i-{v}_i) \Big]\\[0pt]
&     = \partial_{\alpha}(G_{,A}({\xi})) \Bigl[ \bigl({\Phi}_{,i\alpha}^A({\what{F}})-{\Phi}_{,i\alpha}^A({F}) \bigr) \bigl({\what{v}}_i-{v}_i\bigr) \\
&\! \quad \qquad\qquad\qquad  + \bigl({\Phi}_{,i\alpha}^A({\bar{F}_*})-{\Phi}_{,i\alpha}^A({\what{F}}) \bigr) \bigl({\what{v}}_i- {v}_i\bigr)\\
&\! \quad \qquad\qquad\qquad  + \bigl({\Phi}_{,i\alpha}^A({\bar{F}_*})-{\Phi}_{,i\alpha}^A({\what{F}}) \bigr) \bigl({\bar{v}}_i- {\what{v}}_i\bigr) \\
&\!\quad \qquad\qquad\qquad   + \bigl({\Phi}_{,i\alpha}^A({\what{F}})-{\Phi}_{,i\alpha}^A({F}) \bigr) \bigl({\bar{v}}_i- {\what{v}}_i\bigr)\\
&\! \quad \qquad\qquad\qquad  + {\Phi}_{,i\alpha}^A({F}) \bigl({\bar{v}}_i- {\what{v}}_i\bigr) \Bigr].
\end{aligned}
\end{equation}

\par\smallskip

Now, we consider the term $J_4$. We have to exploit the {\it null-Lagrangian} structure incorporated in the scheme to handle it
since $J_4$ is linear in ${v}-{\bar{v}}$. This, in general, would cause difficulties in making the error estimate via the relative
entropy method.

\par\smallskip

Observe that  $({v}-{\bar{v}})(\cdot,t)\in L^{\infty}(\torus^3 \SP; \RR^3)$ and ${\bar{F}_*}(\cdot,t) \in L^{\infty}(\torus^3 \SP;
\mdd{3})$ is a gradient  $\forall t\in[0,T]$. Then, by Lemma \ref{DIVNLLMM}, for each fixed $i=1,\dots,3$ and $A=1,\dots,19$, there
holds
\begin{equation}\label{J4T1}
\sum_{\alpha=1}^3 \partial_{\alpha}\biggl(\pd{{\Phi}^A}{{F}_{i\alpha}}({\bar{F}_*}) \SP ({v}_i-{\bar{v}}_i) \biggr) =
\sum_{\alpha=1}^3 \pd{{\Phi}^A}{{F}_{i\alpha}}({\bar{F}_*})\SP\partial_{\alpha}({v}_i-{\bar{v}}_i) \;\;\, \mbox{in} \;\;\, \cD'(\mathbb{T}^3)\\
\end{equation}
in the sense that
\begin{equation}\label{J4T2}
\begin{aligned}
&-\INTT \sum_{\alpha=1}^3\biggl(\pd{{\Phi}^A}{{F}_{i\alpha}}({\bar{F}_*}) ({v}_i-{\bar{v}}_i) \biggr) \partial_{\alpha}\phi \,
dx =\\
&\quad\qquad\qquad \INTT  \sum_{\alpha=1}^3 \biggl( \pd{{\Phi}^A}{{F}_{i\alpha}}({\bar{F}_*})\SP\partial_{\alpha}({v}_i-{\bar{v}}_i) \biggr) \phi \, dx, \quad \forall \phi \in C^{\infty}(\torus^3).\\
\end{aligned}
\end{equation}
By the standard density argument \eqref{J4T2} holds for all $\phi\in W^{1,1}(\torus^3)$. Since ${\xi}$ is smooth, we have
$\phi_A:=G_{,A}({\xi}(\cdot,t))\in C^1(\torus^3)$. Then, considering \eqref{J4T2} with $\phi_A$ for each $A=1,\dots,19$ and
summing over $i=1,\dots,3$ and $A=1,\dots,19$  (using the summation convention over repeated indices) we conclude
\begin{equation}\label{J4INT}
\begin{aligned}
&0=\INTT \SP {\Phi}^A_{,i\alpha}({\bar{F}_*}) ({v}_i-{\bar{v}}_i) \SP \partial_{\alpha} \bigl(G_{,A}({\xi})\bigr) \SP
dx  \\
&\qquad + \INTT  \SP {\Phi}^A_{,i\alpha}({\bar{F}_*})\SP\partial_{\alpha}({v}_i-{\bar{v}}_i) \SP G_{,A}({\xi}) \SP dx = \INTT \SP J_4 \SP dx.\\
\end{aligned}
\end{equation}

\par

Finally, combining \eqref{JDEF}-\eqref{J3} and \eqref{J4INT} we conclude
\begin{equation}\label{JINT}
\INTT J \, dx  = \INTT \bigl( J_1 + J_2 + J_3 +J_4 \bigr) \, dx = \INTT \bigl( Q+E \bigr) \, dx
\end{equation}
with terms $Q$, $E$ defined in \eqref{QTERM}, \eqref{ETERM} respectively. Then \eqref{RELENTID1}, \eqref{JINT} imply the
desired identity \eqref{RENTID}.
\end{proof}

 % my stuff

\end{document}